\newcommand{\abssec}[1]{\noindent\normalsize {\bfseries #1\quad }\ignorespaces}
\renewenvironment{abstract}{\abssec{Abstract}}{\par\vspace{.1in}}
\newenvironment{keywords}{\abssec{Key Words}}{\par\vspace{.1in}}
\newenvironment{AMS}{\abssec{AMS subject
  classification}}{\par\vspace{.1in}}
\theoremstyle{plain}
\newtheorem{theorem}{Theorem}
\newtheorem{corollary}[theorem]{Corollary}
\newtheorem{lemma}[theorem]{Lemma}
\theoremstyle{definition}
\newtheorem{remark}[theorem]{Remark}
\newtheorem{example}[theorem]{Example}
\numberwithin{equation}{section}
\DeclareMathOperator{\sign}{sign}
\DeclareMathOperator{\proj}{Proj}
\title{On the regularity of the solutions of Dirichlet optimal control problems in polygonal domains\thanks{The second author was partially
supported by the Spanish Ministerio de Ciencia e Innovaci\'on under projects MTM2011-22711.}}
\author{%
Thomas Apel\thanks{Universit\"at der Bundeswehr M\"unchen, 85577 Neubiberg, Germany, (Thomas.Apel@unibw.de)}
\and
Mariano Mateos\thanks{Departamento de Matem\'{a}ticas, E.P.I. de Gij\'on,
Universidad de Oviedo, Campus de Gij\'on, 33203 Gij\'on, Spain (mmateos@uniovi.es).}
\and
Johannes Pfefferer\thanks{Universit\"at der Bundeswehr M\"unchen, 85577 Neuibiberg, Germany, (Johannes.Pfefferer@unibw.de)}
\and
Arnd R\"osch\thanks{Universt\"at Duisburg-Essen, Fakultät für Mathematik,
Thea-Leymann-Straße 9,
D-45127 Essen, Germany (Arnd.Roesch@uni-due.de)}
}
\begin{document}

\maketitle

\begin{abstract}A linear quadratic Dirichlet control problem posed on a possibly non-convex polygonal domain is analyzed. Detailed regularity results are provided in classical Sobolev (Slobodetski\u\i) spaces. In particular, it is proved that in the presence of control constraints, the optimal control is continuous despite the non-convexity of the domain.
\end{abstract}

\begin{keywords}optimal control,  boundary control, Dirichlet control, non-convex polygonal domain
\end{keywords}

\begin{AMS}65N30, 65N15, 49M05, 49M25
\end{AMS}

\pagestyle{myheadings}
\thispagestyle{plain}
\markboth{T. APEL, M. MATEOS, J. PFEFFERER AND A. R\"OSCH} {DIRICHLET CONTROL IN POLYGONAL DOMAINS}

\section{Introduction}
\label{S1}

The investigation of optimal control problems with partial
differential equations has been of increasing interest in the last
decades.
In this paper we will study the control problem
\[
\mbox{(P)}\left\{\begin{array}{l}\displaystyle \min
J(u)=\frac{1}{2}\int_{\Omega}(Su(x)-y_\Omega(x))^2dx
 + \frac{\nu}{2}\int_\Gamma u^2(x)\, d\sigma(x)\vspace{2mm}\\
    \mbox{subject to} \ \
    (Su,u)\in H^{1/2}(\Omega)\times
    L^2(\Gamma),\vspace{2mm}\\
    u\in U_{ad}=\{u\in L^2(\Gamma):\ a\leq u(x)\leq b \ \ \mbox{ for a.a. } \ x\in \Gamma\},
    \end{array}\right.
\]
where $Su$ is the solution $y$ of the 
state equation 
\begin{equation}
 -\Delta y   =  0  \mbox{ in }\Omega,\ 
 y = u  \mbox{ on } \Gamma,
\label{E1.1}
\end{equation}
the domain $\Omega\subset\mathbb{R}^2$ is bounded and polygonal,
$\Gamma$ is its boundary, $a<b$ and $\nu>0$ are real constants and $y_\Omega$ is a function whose precise regularity will be stated when necessary.
Note that, for $u\in
U_{ad}$, the state equation does not possess a variational solution in
general, so a very weak solution is considered (see Theorem \ref{T2.4}). We will discuss here
the regularities of the optimal state $\bar y$, the optimal control
$\bar u$ and the corresponding adjoint state $\bar\varphi$ which are
limited by singularities due to corners of the domain and due to the
presence of control constraints.

The classical Sobolev
(Slobodetski\u\i) spaces are denoted by $W^{t,p}(\Omega)$ and, in the
case of $p=2$, by $H^t(\Omega)$. As usual, for $t>0$, $W^{t,p}_0(\Omega)$ or $H^t_0(\Omega)$ will denote the closure respectively in $W^{t,p}(\Omega)$ or $H^t(\Omega)$ of $\mathcal{D}(\Omega)$, the space of infinitely differentiable functions with compact support in $\Omega$, and $W^{-t,q}(\Omega)$ with $q^{-1}+p^{-1}=1$ [resp. $H^{-t}(\Omega)$] is the dual space of $W^{t,p}_0(\Omega)$ [resp. $H^{t}_0(\Omega)$].

The seminal paper on Dirichlet control problems is the work by Casas and
Raymond \cite{Casas-Raymond2006}. They investigate the problem even
with a semilinear state equation. Assuming a convex polygonal domain
with maximal interior angle $\omega_1<\pi$, they prove $\bar u\in
W^{1-1/p,p}(\Gamma)$ and $\bar y\in W^{1,p}(\Omega)$ with
$p<p_\Omega=2/(2-\min\{\lambda_1,2\})$ and $\lambda_1=\pi/\omega_1$ both for the
case with and without control constraints. Note that $p_\Omega>2$ due to
the convexity of the domain.
May, Rannacher and Vexler \cite{MayRannacherVexler2013} consider the
unconstrained Dirichlet control problem with linear state equation
\eqref{E1.1} also in convex domains and derive $\bar u\in
H^{1-1/p}(\Gamma)$ and $\bar y\in H^{3/2-1/p}(\Omega)$ for $p<p_\Omega$.
Deckelnick, G\"unter and Hinze \cite{Decklenick-Gunther-Hinze2009}
focus on approximation issues in the case of smooth domains (class
$C^3$) in two and three space dimensions. The regularity is determined
by the box constraints since corner (or edge) singularities do not
occur. Finally we would like to mention the paper
\cite{OfPhanSteinbach2010} by Of, Phan, and Steinbach where the
control is searched in $H^{1/2}(\Omega)$ such that the state $\bar y\in
H^1(\Omega)$ satisfies the weak formulation; in this case the
regularity issues are less severe.

Due to the assumptions on the domain, the publications
\cite{Casas-Raymond2006}, \cite{Decklenick-Gunther-Hinze2009}, and
\cite{MayRannacherVexler2013} have in common that the adjoint problem
can be solved in $H^2(\Omega)\cap H^1_0(\Omega)$. For that reason
the very weak formulation of the state equation  is well
defined in these publications with test functions from $H^2(\Omega)\cap
H^1_0(\Omega)$.  This is not the case when non-convex domains are
considered. Instead, the very weak fomulation of the state equation
should be defined with test functions from $H^1_\Delta(\Omega) \cap
H^1_0(\Omega)$, where $H^1_\Delta(\Omega):=\{v\in H^1(\Omega): \Delta
v\in L^2(\Omega)\}$, see the paper by Casas, Mateos and Raymond
\cite[(A.16)]{Casas-Mateos-Raymond-2009} and also
\cite{Apel-Nicaise-Pfefferer2014} for further approaches how to
understand the solution of the Poisson equation with non-smooth
boundary data.

In the paper at hand we prove basic regularity results for the
solution of the state and adjoint state equations in Section \ref{sec:2}. We extend in Theorem \ref{T2.4} to non-convex domains the well-known $H^{1/2}(\Omega)$ regularity of the solution of the state equation \eqref{E1.1} and we prove in Theorem \ref{T2.7} that the maximum principle also holds for very weak solutions.
Our main regularity results for the variables in the optimal control problem are proved
in Section \ref{sec:3}. Among many other results, we prove that, in the presence of control constraints and under minimal regularity assumptions on the data ($y_\Omega\in L^{s^*}(\Omega)$, $s^*>2$), the optimal control is continuous despite the possible non-convexity of the domain (cf. Theorem \ref{T3.4}). The main idea is very simple: we compute explicitly the normal derivative of the singular part of the adjoint state and exploit the projection relation established by the first order optimality conditions. We also investigate the case $s^*=2$. This case has not been treated by the cited references for convex domains.
In Section \ref{sec:4} we prove that, for regular data, the regularity of the optimal solution is indeed slightly better. We give conditions for the control to be in $H^{3/2-\varepsilon}(\Omega)$ for all $\varepsilon>0$ (Corollary \ref{C4.2}), which is the best regularity we can expect under pointwise control constraints.
These results will be helpful to derive error estimates for  finite element approximations of problem (P). The numerical analysis will be carried out in a forthcoming paper.
The short Section \ref{sec:5} is
devoted to the unconstrained case. Whereas in many other control
problems the regularity of the unconstrained solution is better than
that of the constrained solution, we encounter here the phenomenon
that the constraint inhibits poles of the unconstrained solution. For
that reason the regularity of the constrained control is determined, up to some exceptional cases (cf. Remark \ref{R3.5}), by
the largest convex angle but the regularity of the unconstrained
control is determined by the overall largest angle (cf. Corollary \ref{C5.2}).


Finally we would like to remark that sometimes the state equation is
considered in the form
\begin{equation}\label{E1.3}
  -\Delta y   =  f  \mbox{ in }\Omega,\ y  =  u + g   \mbox{ on } \Gamma,
\end{equation}
but we can take the unique function $y_0$ which solves $-\Delta y_0=f$
in $\Omega$, $y_0=g$ on $\Gamma$, and replace $y:=y+y_0$,
$y_\Omega:=y_\Omega-y_0$ and recover problem (P) with \eqref{E1.1} for data sufficiently smooth.

\section{\label{sec:2}Notation and basic results for elliptic equations}
Let us denote by $M$ the number of sides of $\Gamma$ and $\{x_j\}_{j=1}^M$ its vertexes, ordered counterclockwise. For convenience denote also $x_0=x_M$ and $x_{M+1}=x_1$. We will denote by $\Gamma_j$ the side of $\Gamma$ connecting $x_{j}$ and $x_{j+1}$, and by $\omega_j\in (0,2\pi)$ the angle interior to $\Omega$ at $x_j$, i.e., the angle defined by $\Gamma_{j}$ and $\Gamma_{j-1}$, measured counterclockwise. Notice that  $\Gamma_{0}=\Gamma_M$. We will use $(r_j,\theta_j)$ as local polar coordinates at $x_j$, with $r_j=|x-x_j|$ and $\theta_j$ the angle defined by $\Gamma_j$ and the segment $[x_j,x]$.
In order to describe the regularity of the functions near the corners, we will introduce for every $j=1,\ldots,M$ the infinite cone
\[K_j=\{x\in\mathbb{R}^2:0<r_j,\,0<\theta_j<\omega_j\}\]
and a positive number $R_j$ such that the sets
\[N_j=\{x\in\mathbb{R}^2:0<r_j<2R_j,\,0<\theta_j<\omega_j\},\]
satisfy $N_j\subset\Omega$ for all $j$ and $N_i\cap N_j=\emptyset$ if $i\neq j$. 

For every $j=1,\ldots,M$ we will also consider \[\xi_j:\mathbb{R}^2\to[0,1]\] an infinitely differentiable cut-off function which is equal to $1$ in the set $\{x\in\mathbb{R}^2:r_j<R_j\}$ and equal to $0$ in the set $\{x\in\mathbb{R}^2:r_j>2R_j\}$.

We will denote by $z_f$ the solution of the homogeneous Dirichlet problem with distributed data
\begin{equation}\label{E2.1}-\Delta z=f\mbox{ in }\Omega,\ z=0\mbox{ on }\Gamma.\end{equation}
The regularity of $f$ and $z_f$, as well as in what sense the equation must be understood, will be stated when necessary.

For every $j=1,\ldots,M$ we will call $\lambda_j$ the leading singular exponent associated with the operator corresponding to the corner $x_j$. For the Laplace operator it is well known that $\lambda_j=\pi/\omega_j$.
For convenience we will suppose that $\lambda_1=\min\{\lambda_j:\ j=1,\ldots,M\}$.
In general, the maximum regularity in non-weighted Sobolev spaces of the solution of the Poisson problem for regular data will be given by the Sobolev exponents \[p_\Omega=\frac{2}{2-\min\{\lambda_1,2\}}\mbox{ and }t_\Omega=1+\lambda_1.\] This is, for $f\in C^\infty(\Omega)$, the solution $z_f$ of equation \eqref{E2.1}
will satisfy that $z\in W^{2,p}(\Omega)\cap H^{t}(\Omega)$, for all $p<p_\Omega$ (cf. Grisvard \cite[Th. 4.4.3.7]{Grisvard85}) and all $t<t_\Omega$ (cf. Dauge \cite[\S23.C]{Dauge1988}; see also  Grisvard \cite[Th. 2.4.3 and \S2.7]{Grisvard1992}). For less regular data $f\in W^{-1,q}(\Omega)$, the maximum regularity of the solution of the homogeneous Dirichlet problem is given by the exponent
\[p_D =\frac{2}{1-\min\{1,\lambda_1\}}.\]
This means that $z_f\in W^{1,q}(\Omega)$ if $q<p_D$ (cf. Dauge \cite[Theorem 1.1(i)]{Dauge1992}; see also Jerison and Kenig \cite[Thms. 0.5, 1.1, 1.3]{Jerison-Kenig1995}).
Further results  on the local regularity in each corner can be stated using weighted Sobolev spaces.

We will state now two lemmas collecting several regularity results that will be used later to state the regularity of the solution of the control problem.
The following lemma is a consequence of well known regularity results collected in the book by Grisvard \cite{Grisvard85}. It tells us accurately how the singularities arising from the corners behave for problems with regular data. We introduce the following sets in order to describe the singular behavior of the solution of the Poisson problem at the corners.

For $1< p <+\infty$ such that
\begin{equation}\label{E2.2}\frac{2(p-1)}{p\lambda_j}\not\in\mathbb{Z}\ \forall j\in\{1,\ldots,M\}
\end{equation}
 and $m\in \mathbb{Z}$, define
\begin{equation}\label{E2.3}
\mathbb{J}^m_p=\left\{j\in\{1,\ldots,M\}\mbox{ such that }
0<m\lambda_j < 2 -\frac{2}{p}\right\}.\end{equation}
The condition in \eqref{E2.2} is necessary to deduce the $W^{2,p}(\Omega)$-regularity of the regular part of the solution in the lemma below. The meaning of the sets $\mathbb{J}^m_p$ is the following:
\[\xi_j r_j^{m\lambda_j}\not\in W^{2,p}(\Omega)\mbox{ for all }j\in\mathbb{J}^m_p.\]
Notice that, for all $p<+\infty$,
$\mathbb{J}_p^m=\emptyset$ for
 $m\geq 4$ since $\lambda_j>1/2$ for every possible angle and $2-2/p<2$ for all $p<+\infty$. We also remark that $\mathbb{J}^3_p\subset\mathbb{J}^2_p\subset\mathbb{J}^1_p$.
\begin{lemma}\label{L2.1}Consider $1< p < +\infty$ satisfying \eqref{E2.2} and $f\in L^p(\Omega)$. Then there exist unique real numbers $(c_{j,m})_{j\in \mathbb{J}^m_p}$ and a unique solution $z_f\in H^{1}(\Omega)$ of problem \eqref{E2.1}
such that
\[z_f = z_{reg}+\sum_{m=1}^3\sum_{j\in \mathbb{J}^m_p} c_{j,m}\xi_j r_j^{m\lambda_j}\sin(m\lambda_j \theta_j)\]
where $z_{reg}\in W^{2,p}(\Omega)$ and the $\xi_j$ are the cut-off functions introduced above.
\end{lemma}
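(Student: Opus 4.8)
The plan is to decompose the solution into a regular part and finitely many corner singularities by invoking the classical Kondrat'ev–Grisvard theory for the Dirichlet problem on polygonal domains. The structure of the statement — a $W^{2,p}$ regular part plus explicit terms $\xi_j r_j^{m\lambda_j}\sin(m\lambda_j\theta_j)$ — is exactly the form of the singular expansion for $-\Delta$ near a corner of opening $\omega_j$, where the harmonic singular functions are $r_j^{m\lambda_j}\sin(m\lambda_j\theta_j)$ with $\lambda_j=\pi/\omega_j$. I would cite Grisvard \cite{Grisvard85} (Theorems 4.4.3.7 and 5.1.2.1, together with the corner-expansion results of Section 4.4) as the main engine. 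Let me think carefully about the index set: a term $\xi_j r_j^{m\lambda_j}\sin(m\lambda_j\theta_j)$ must appear in the expansion precisely when the exponent $m\lambda_j$ is a genuine singular exponent that fails to lie in $W^{2,p}$ locally, and the condition $\xi_j r_j^{m\lambda_j}\notin W^{2,p}(\Omega)$ is equivalent to $m\lambda_j-2<-2/p$, i.e. $m\lambda_j<2-2/p$, which is exactly the membership condition defining $\mathbb{J}^m_p$ in \eqref{E2.3}. So the index sets are matched to the regularity threshold by construction.

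**First I would** establish existence of the $H^1$ solution $z_f$: since $f\in L^p(\Omega)\subset H^{-1}(\Omega)$ (using $p>1$ and the Sobolev embedding in two dimensions, or at worst that $L^p$ functions define bounded functionals on $H^1_0$ for the relevant range), Lax–Milgram gives a unique variational solution $z_f\in H^1_0(\Omega)$. **Next I would** apply the shift theorem near each corner: on the cone $K_j$, the $L^p$ right-hand side produces a solution whose asymptotics are governed by the eigenvalues $m\lambda_j$ of the associated operator pencil lying in the relevant strip, with the closed-form singular functions above. Summing the cut-off localized singular parts over $j$ and over $m=1,2,3$ — the truncation at $m=3$ being justified by the remark preceding the lemma that $\mathbb{J}^m_p=\emptyset$ for $m\geq 4$ — yields a function $z_f-\sum_{m=1}^3\sum_{j\in\mathbb{J}^m_p}c_{j,m}\xi_j r_j^{m\lambda_j}\sin(m\lambda_j\theta_j)$ that is locally $W^{2,p}$ near every corner. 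Away from the corners interior elliptic regularity gives $W^{2,p}$ directly, so a partition-of-unity argument assembles the global $W^{2,p}(\Omega)$ membership of $z_{reg}$.

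**The uniqueness of the coefficients** $c_{j,m}$ follows from the linear independence of the singular functions: because the exponents $m\lambda_j$ are distinct and strictly less than the regularity threshold $2-2/p$ for $j\in\mathbb{J}^m_p$, no nontrivial combination of the $\xi_j r_j^{m\lambda_j}\sin(m\lambda_j\theta_j)$ can lie in $W^{2,p}(\Omega)$; hence two expansions of the same $z_f$ must share the same coefficients, and the decomposition is unique. The role of hypothesis \eqref{E2.2} is precisely to exclude the borderline resonant case $m\lambda_j=2-2/p$, in which the pure power singularity would be replaced by a logarithmically modified term $r_j^{m\lambda_j}\log r_j$ and the clean $W^{2,p}$ splitting would fail; assuming $2(p-1)/(p\lambda_j)\notin\mathbb{Z}$ guarantees we stay off every such exceptional exponent.

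**The main obstacle** I anticipate is bookkeeping rather than conceptual: one must verify carefully that the sets $\mathbb{J}^m_p$ capture \emph{exactly} those singular exponents that obstruct $W^{2,p}$ regularity — neither omitting a genuine singularity (which would leave $z_{reg}\notin W^{2,p}$) nor including a spurious one (which would break uniqueness of the $c_{j,m}$). This amounts to checking the equivalence $\xi_j r_j^{m\lambda_j}\notin W^{2,p}(\Omega)\iff m\lambda_j<2-2/p$ by a direct computation of the $L^p$ norm of the second derivatives of $r_j^{m\lambda_j}$ in polar coordinates, where the Jacobian $r_j\,dr_j\,d\theta_j$ and the $r_j^{m\lambda_j-2}$ behavior of the Hessian combine to give integrability iff $p(m\lambda_j-2)+1>-1$. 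Granting the cited shift theorems, the remainder of the argument is the standard localize–expand–reassemble scheme, and I would keep the exposition brief by referring to Grisvard for the corner analysis and to interior regularity for the smooth part.
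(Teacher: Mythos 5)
Your proposal is correct and follows essentially the same route as the paper: the paper's proof simply invokes Grisvard's Theorem 4.4.3.7 (after noting $z_f\in H^1(\Omega)$ via Lemma 4.4.3.1 of the same reference), which is exactly the corner-expansion machinery you identify, with your verification that $\xi_j r_j^{m\lambda_j}\notin W^{2,p}(\Omega)\iff m\lambda_j<2-2/p$ and your reading of hypothesis \eqref{E2.2} as excluding the resonant logarithmic case both matching the paper's framing. The additional detail you supply (localization, uniqueness of the coefficients, the role of the truncation at $m=3$) is consistent with, and merely unpacks, the cited theorem.
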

\begin{proof}The result is a direct consequence of \cite[Theorem 4.4.3.7]{Grisvard85}. This result can be applied since $z_f\in H^1(\Omega)$ thanks to \cite[Lemma 4.4.3.1]{Grisvard85}.
\qquad\end{proof}

\begin{corollary}\label{C2.2}If $f\in L^2(\Omega)$, then $z_f\in H^t(\Omega)$ for all $t<1+\lambda_1$ and  $\partial_n z_f\in L^2(\Gamma)$.
\end{corollary}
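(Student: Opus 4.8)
The plan is to read off the claim from the singular-function decomposition of Lemma~\ref{L2.1} applied with $p=2$. For $p=2$ the admissibility condition \eqref{E2.2} becomes $1/\lambda_j=\omega_j/\pi\notin\mathbb{Z}$, which holds at every genuine corner (the only excluded value in $(0,2\pi)$ is $\omega_j=\pi$, i.e.\ no corner); should some $\omega_j=\pi$ occur, one simply applies the lemma with a $p$ slightly below $2$, using $L^2(\Omega)\subset L^p(\Omega)$ on the bounded domain. With $p=2$ the index sets \eqref{E2.3} are $\mathbb{J}^1_2=\{j:\lambda_j<1\}$, the set of reentrant corners, while $\mathbb{J}^2_2=\mathbb{J}^3_2=\emptyset$ because $\lambda_j=\pi/\omega_j>1/2$ for every $\omega_j\in(0,2\pi)$. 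Hence Lemma~\ref{L2.1} gives $z_f=z_{reg}+\sum_{j\in\mathbb{J}^1_2}c_{j,1}\,\xi_j r_j^{\lambda_j}\sin(\lambda_j\theta_j)$ with $z_{reg}\in W^{2,2}(\Omega)=H^2(\Omega)$.

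Next I would determine the Sobolev regularity of each singular summand. A direct computation in the local polar coordinates $(r_j,\theta_j)$, using that $\xi_j$ is smooth and supported near $x_j$, shows that $\xi_j r_j^{\lambda_j}\sin(\lambda_j\theta_j)\in H^t(\Omega)$ for every $t<1+\lambda_j$ and that this range is sharp. Since $z_{reg}\in H^2(\Omega)$ and $\lambda_1=\min_j\lambda_j$, the least of the exponents $1+\lambda_j$ over $j\in\mathbb{J}^1_2$ equals $1+\lambda_1$ (if $\mathbb{J}^1_2$ is nonempty then some $\lambda_j<1$, whence $\lambda_1\le\lambda_j<1$ and $1\in\mathbb{J}^1_2$); combining the regular and singular parts therefore yields $z_f\in H^t(\Omega)$ for all $t<1+\lambda_1$, while when $\mathbb{J}^1_2=\emptyset$ the domain is convex and $z_f\in H^2(\Omega)$.

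For the normal derivative I would exploit that $\lambda_j>1/2$ for all $j$, so that $1+\lambda_1>3/2$. This lets me fix some $t$ with $3/2<t<1+\lambda_1$; then $\nabla z_f\in H^{t-1}(\Omega)^2$ with $t-1>1/2$, and the trace theorem applied on each side $\Gamma_i$ gives $\nabla z_f|_{\Gamma_i}\in H^{t-3/2}(\Gamma_i)\hookrightarrow L^2(\Gamma_i)$. Taking the normal component yields $\partial_n z_f=\nabla z_f\cdot n\in L^2(\Gamma)$.

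The technical heart is the second paragraph: verifying the precise threshold $t<1+\lambda_j$ (and its sharpness) for the model functions $r_j^{\lambda_j}\sin(\lambda_j\theta_j)$, which is what forces the exponent $1+\lambda_1$ rather than $H^2$ in the non-convex case. The trace step is then essentially automatic, its only nontrivial input being the universal bound $\lambda_1>1/2$ that keeps $t$ above $3/2$.
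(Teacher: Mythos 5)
Your proposal is correct and follows essentially the same route as the paper: apply Lemma~\ref{L2.1} with $p=2$, note that the regular part is in $H^2(\Omega)$ and that each singular term $\xi_j r_j^{\lambda_j}\sin(\lambda_j\theta_j)$ lies in $H^t(\Omega)$ for $t<1+\lambda_j$, and then use $\lambda_1>1/2$ to pick $t>3/2$ so that the trace theorem gives $\partial_n z_f\in L^2(\Gamma)$. Your additional checks (condition \eqref{E2.2} at $p=2$, emptiness of $\mathbb{J}^2_2$ and $\mathbb{J}^3_2$, the convex case) are details the paper leaves implicit, not a different argument.
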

\begin{proof}
To prove this fact, we apply Lemma \ref{L2.1}: the regular part is in $H^2(\Omega)$; on the other hand, since $\lambda_j\geq\lambda_1>1/2$ and $m\geq 1$, then $\xi_j r_j^{m\lambda_j}\in H^{t}(\Omega)$ for all $t<1+\lambda_1$. From this we obtain that $z_f\in H^t(\Omega)$. Since we can choose $t>3/2$ we have $\partial_n z_f\in L^2(\Gamma)$.
\qquad\end{proof}

The next result states the regularity of the solution of problems with boundary data in $W^{1-1/p^*,p^*}(\Gamma)$ with $p^*\geq 2$. Let us recall (cf. \cite[Theorem 1.5.2.3]{Grisvard85}) that  the trace of any function $z\in W^{1,p^*}(\Omega)$ is in $W^{1-1/p^*,p^*}(\Gamma)$, the trace mapping is onto, and, for $p^*>2$, this space can be characterized as
\[W^{1-1/p^*,p^*}(\Gamma) = \left\{g\in \prod_{i = 1}^M W^{1-1/p^*,p^*}(\Gamma_i):\ g\mbox{ is continuous at every corner }x_j	 \right\}.\]
For $p^*=2$ the continuity requirement in the corners can be weakened to an integral condition, see \cite[Theorem 1.5.2.3(c)]{Grisvard85}.

\begin{lemma}\label{L2.3}Let $g\in W^{1-1/p^*,p^*}(\Gamma)$ for some $p^*\geq 2$. Then there exists a unique solution $z\in W^{1,p}(\Omega)$, for all $p\leq p^*$, $p<p_D$,
 of the equation
\[-\Delta z = 0\mbox{ in }\Omega,\ z=g\mbox{ on }\Gamma.\]
\end{lemma}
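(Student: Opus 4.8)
The plan is to reduce the inhomogeneous Dirichlet problem to a homogeneous one whose right-hand side lies in $W^{-1,p^*}(\Omega)$, for which the regularity has already been recorded through the exponent $p_D$. First I would invoke the surjectivity of the trace operator stated just before the lemma: since $g\in W^{1-1/p^*,p^*}(\Gamma)$ and the trace map $W^{1,p^*}(\Omega)\to W^{1-1/p^*,p^*}(\Gamma)$ is onto, there is a lifting $w\in W^{1,p^*}(\Omega)$ with $w=g$ on $\Gamma$. Because $\partial_i w\in L^{p^*}(\Omega)$, the distribution $h:=\Delta w$ belongs to $W^{-1,p^*}(\Omega)$, and on the bounded domain $\Omega$ one has $W^{-1,p^*}(\Omega)\hookrightarrow W^{-1,q}(\Omega)$ for every $q\le p^*$.

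Next I would split $z=w+\tilde z$, so that the problem for $z$ is equivalent to $-\Delta\tilde z=\Delta w=h$ in $\Omega$, $\tilde z=0$ on $\Gamma$; that is, $\tilde z=z_h$ in the notation of \eqref{E2.1}. Applying the regularity result for the homogeneous Dirichlet problem with data in $W^{-1,q}(\Omega)$ quoted earlier (Dauge \cite[Theorem 1.1(i)]{Dauge1992}; see also Jerison and Kenig \cite[Thms. 0.5, 1.1, 1.3]{Jerison-Kenig1995}) gives $z_h\in W^{1,q}(\Omega)$ for every $q<p_D$. Combined with the fact that $h\in W^{-1,q}(\Omega)$ only when $q\le p^*$, this yields $\tilde z\in W^{1,p}(\Omega)$ for all $p\le p^*$ with $p<p_D$. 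Adding back the lifting $w\in W^{1,p^*}(\Omega)$ produces $z\in W^{1,p}(\Omega)$ in the same range, and by construction $\Delta z=0$ in $\Omega$ with trace $g$ on $\Gamma$, which is the asserted solution.

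For uniqueness I would note that $p^*\ge2$ and $p_D>2$ (indeed $\lambda_1>1/2$ forces $p_D>4$), so the constructed $z$ lies in $W^{1,2}(\Omega)=H^1(\Omega)$, where the problem admits the standard weak formulation. Any two $H^1(\Omega)$ solutions differ by a harmonic function with vanishing trace, which must vanish by the usual Lax--Milgram/Poincar\'e argument; since $W^{1,p}(\Omega)\hookrightarrow H^1(\Omega)$ for $p\ge2$ on the bounded domain, uniqueness in the full range follows.

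The routine ingredients here are the lifting step and the Sobolev embeddings; the single point that requires care is the bookkeeping of the two competing constraints on the exponent, namely the singularity bound $p<p_D$ inherited from the corner regularity of the homogeneous problem and the integrability bound $p\le p^*$ inherited from the smoothness of the datum $g$, so that one correctly identifies their intersection as the range governing $z$. I expect no genuine obstacle, since all the analytic input is already available; the actual content is the decomposition that converts the nonsmooth boundary data into a $W^{-1,p^*}(\Omega)$ right-hand side.
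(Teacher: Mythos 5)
Your argument is essentially the paper's own proof: lift $g$ to $W^{1,p^*}(\Omega)$ via trace surjectivity, observe that the Laplacian of the lifting lies in $W^{-1,p^*}(\Omega)$, and apply the Dauge/Jerison--Kenig regularity for the homogeneous Dirichlet problem to conclude $z\in W^{1,p}(\Omega)$ for $p\le p^*$, $p<p_D$. The only addition is your explicit uniqueness argument via the $H^1$ weak formulation, which the paper leaves implicit; it is correct.
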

\begin{proof}
Due to the trace theorem, there exists a function $G\in W^{1,p^*}(\Omega)$ such that its trace is $\gamma G = g$. Moreover, we have that $\Delta G \in W^{-1,p^*}(\Omega)$. If we define $\zeta= z-G$, then it satisfies the boundary value problem
\[-\Delta \zeta = -\Delta G\mbox{ in }\Omega,\ \zeta = 0 \mbox{ on }\Gamma.\]
Let $p\leq p^*$, $p<p_D$. Then $\zeta\in W^{1,p}_0(\Omega)$ (see above or cf. Dauge \cite[Theorem 1.1(i)]{Dauge1992}; see also Jerison and Kenig \cite[Thms. 0.5, 1.1, 1.3]{Jerison-Kenig1995}) and hence so does $z\in W^{1,p}(\Omega)$.
\qquad\end{proof}

Since the space of controls is $L^2(\Gamma)$, the state equation must be understood in the transposition sense. Following  \cite{Berggren2004,Casas-Mateos-Raymond-2009}, for $u\in L^2(\Gamma)$, we will say that $y\in L^2(\Omega)$  is a solution of \eqref{E1.1} if for every $f\in L^2(\Omega)$
\begin{equation}\label{E2.4}\int_\Omega y f dx  = -\int_\Gamma u \partial_n z_f  d\sigma(x),
\end{equation}
where $z_f$ is defined in Lemma \ref{L2.1}. The definition makes sense thanks to Corollary \ref{C2.2}.

Existence, uniqueness and regularity of the solution in $y\in H^{1/2}(\Omega)$ if $\Omega$ is convex domains is a classical result and can be proved via transposition and interpolation. Let us briefly recall how this result is obtained for a convex domain. Consider the solution operator $S$ of \eqref{E1.1} with $Su=y$. Due to the Lemma \ref{L2.3} it is
\begin{equation}\label{E2.5}S\in\mathcal{L}(H^{1/2}(\Gamma),H^1(\Omega)).\end{equation} Using the classical transposition method (cf. \cite{Lions-Magenes68}) we also have that
\begin{equation}\label{E2.6}S\in\mathcal{L}(H^{-1/2}(\Gamma),L^2(\Omega)).\end{equation} The final result is obtained by interpolation using that
\[L^2(\Gamma)=[H^{1/2}(\Gamma),H^{-1/2}(\Gamma)]_{1/2}\mbox{ and }
H^{1/2}(\Omega)=[H^1(\Omega),L^2(\Omega)]_{1/2},\]
see e.g. \cite[Chap. 1, Eq. (2.41)]{Lions-Magenes68}) for the first result and notice that interpolation results are valid for spaces posed on Lipschitz domains (cf.  \cite[Theorem 12.2.7]{Brenner-Scott94} or \cite{Jerison-Kenig1995}).
We cannot use this scheme straightforward because \eqref{E2.6} uses explicitly that for every $f\in L^2(\Omega)$, $z_f\in H^2(\Omega)$, and this is not true for non-convex polygonal domains.

For problems posed on non-convex polygonal domains, Berggren \cite[Theorem 4.2]{Berggren2004} proves existence, uniqueness and regularity of the solution $y\in H^{t}(\Omega)$ for every $0<t<\epsilon\leq 1/2$ ($\epsilon$ depends on the domain).
We can also achieve $y\in H^{1/2}(\Omega)$ in non-convex domains. The proof of our following result uses interpolation spaces; a different proof by using integral operators is given in \cite{Apel-Nicaise-Pfefferer2014}.
\begin{theorem}\label{T2.4}For every $u\in L^2(\Gamma)$ there exists a unique solution $y\in H^{1/2}(\Omega)$ of \eqref{E1.1} and
\[\|y\|_{H^{1/2}(\Omega)}\leq C \|u\|_{L^2(\Gamma)}.\]
\end{theorem}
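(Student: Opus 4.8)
The plan is to keep the transposition-plus-interpolation scheme described just before the statement, but to substitute for the endpoint \eqref{E2.6} (which is genuinely unavailable here, since it rests on $z_f\in H^2(\Omega)$) the sharp normal-trace estimate of Corollary \ref{C2.2}, which survives the loss of convexity. I would carry this out in three ingredients: first existence/uniqueness together with a crude $L^2$ bound, then the upper endpoint \eqref{E2.5}, and finally the interpolation that upgrades $L^2(\Omega)$ to $H^{1/2}(\Omega)$.

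For existence and uniqueness I would argue directly from the definition \eqref{E2.4}. For fixed $u\in L^2(\Gamma)$ the map $f\mapsto-\int_\Gamma u\,\partial_n z_f\,d\sigma$ is a bounded linear functional on $L^2(\Omega)$, because Corollary \ref{C2.2} gives $\partial_n z_f\in L^2(\Gamma)$ with $\|\partial_n z_f\|_{L^2(\Gamma)}\le C\|f\|_{L^2(\Omega)}$, whence $|\int_\Gamma u\,\partial_n z_f|\le C\|u\|_{L^2(\Gamma)}\|f\|_{L^2(\Omega)}$. The Riesz representation theorem then produces a unique $y\in L^2(\Omega)$ satisfying \eqref{E2.4}, so that $S\in\mathcal{L}(L^2(\Gamma),L^2(\Omega))$. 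Note that here the non-convexity is actually helpful for the upper endpoint: since $\lambda_1<1$ one has $p_D=2/(1-\lambda_1)>2$, so Lemma \ref{L2.3} with $p^*=p=2$ yields $S\in\mathcal{L}(H^{1/2}(\Gamma),H^1(\Omega))$, which is exactly \eqref{E2.5}.

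The remaining step is the interpolation, using $L^2(\Gamma)=[H^{1/2}(\Gamma),H^{-1/2}(\Gamma)]_{1/2}$ on the boundary and $H^{1/2}(\Omega)=[H^1(\Omega),L^2(\Omega)]_{1/2}$ in the domain, which is legitimate on Lipschitz domains by the references already cited. This is where the main obstacle lies, and it is essential to see why the two facts assembled so far are not yet enough: interpolating \eqref{E2.5} with the plain bound $S\in\mathcal{L}(L^2(\Gamma),L^2(\Omega))$ only produces the family $S\in\mathcal{L}(H^{s}(\Gamma),H^{2s}(\Omega))$, i.e.\ the weaker ``$\times 2$'' scale, which returns $L^2(\Omega)$ at $s=0$ rather than the desired $H^{1/2}(\Omega)$. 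The harmonic extension actually lives on the sharper ``$+\tfrac12$'' scale $H^{s}(\Gamma)\to H^{s+1/2}(\Omega)$, and to reach it one needs the lower endpoint $S\in\mathcal{L}(H^{-1/2}(\Gamma),L^2(\Omega))$.

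The hard part is precisely to recover this lower endpoint without $H^2$-regularity of $z_f$. The classical route pairs $\int_\Gamma u\,\partial_n z_f$ through $\|u\|_{H^{-1/2}(\Gamma)}\|\partial_n z_f\|_{H^{1/2}(\Gamma)}$, but at a reentrant corner $\partial_n z_f$ behaves like $r_j^{\lambda_j-1}$ with $\tfrac12<\lambda_j<1$, so it belongs to $L^2(\Gamma)$ (as Corollary \ref{C2.2} asserts) but not to $H^{1/2}(\Gamma)$, and the pairing breaks down. My plan is to circumvent this by exploiting that $Su$ is harmonic: the genuinely dangerous modes $r_j^{-\lambda_j}$ lie in $L^2(\Omega)$ yet have nontangential maximal function outside $L^2(\Gamma)$, so they cannot occur in the transposition solution, and the apparent $r_j^{-\lambda_j}$ contributions coming from the singular decomposition $z_f=z_{\mathrm{reg}}+\sum c_{j,m}\,\xi_j r_j^{m\lambda_j}\sin(m\lambda_j\theta_j)$ of Lemma \ref{L2.1} must cancel. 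Technically I would split the transposition functional along this decomposition: the regular part $z_{\mathrm{reg}}\in W^{2,p}(\Omega)$ is handled by interpolating the map $f\mapsto\partial_n z_{\mathrm{reg}}$ exactly as in the convex case, and the finitely many singular contributions are controlled using the boundedness of the coefficient functionals $c_{j,m}$ together with the sharp $L^2(\Gamma)$ trace; equivalently, one may invoke the Jerison--Kenig estimate already used for Lemma \ref{L2.3}. Either way the outcome is the endpoint $S\in\mathcal{L}(H^{-1/2}(\Gamma),L^2(\Omega))$, and interpolation at $\theta=\tfrac12$ then delivers $S\in\mathcal{L}(L^2(\Gamma),H^{1/2}(\Omega))$ with the stated bound, the regularity being pinned at the borderline value $t=\tfrac12$ by the low regularity of the datum $u\in L^2(\Gamma)$ rather than by the corners.
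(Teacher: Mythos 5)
Your setup is sound as far as it goes: existence and uniqueness via the Riesz representation theorem in $L^2(\Omega)$ using Corollary \ref{C2.2}, the upper endpoint \eqref{E2.5} via Lemma \ref{L2.3}, and the correct observation that interpolating \eqref{E2.5} against the trivial bound $S\in\mathcal{L}(L^2(\Gamma),L^2(\Omega))$ only yields the useless ``$\times 2$'' scale. The gap is in the step you yourself flag as the hard part. The lower endpoint $S\in\mathcal{L}(H^{-1/2}(\Gamma),L^2(\Omega))$ that you propose to recover is not merely hard to prove in a non-convex polygon --- it is false for the operator defined by the transposition identity \eqref{E2.4}. Indeed, for smooth $u$ one has
\[
\sup\{\|Su\|_{L^2(\Omega)}:\ \|u\|_{H^{-1/2}(\Gamma)}\le 1\}=\sup\{\|\partial_n z_f\|_{H^{1/2}(\Gamma)}:\ \|f\|_{L^2(\Omega)}\le 1\},
\]
and the right-hand side is infinite as soon as some $f\in L^2(\Omega)$ produces a nonzero leading singular coefficient $c_{j,1}$ at a reentrant corner: then $\partial_n z_f$ behaves like $r_j^{\lambda_j-1}$ with $\lambda_j<1$, which lies in $H^{s}(\Gamma)$ only for $s<\lambda_j-1/2<1/2$. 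No cancellation can rescue this: the leading singularity is a single mode per corner, the coefficient functional $f\mapsto c_{j,1}$ is bounded but not zero, and the $L^2(\Gamma)$ control of $\partial_n z_f$ from Corollary \ref{C2.2} is strictly weaker than the $H^{1/2}(\Gamma)$ control that the $H^{-1/2}$--$H^{1/2}$ pairing requires. Your heuristic about the modes $r_j^{-\lambda_j}$ not occurring in $Su$ concerns the structure of the solution, not the boundedness of the bilinear form, and does not address this failure; likewise, the Jerison--Kenig results used for Lemma \ref{L2.3} live on the upper part of the scale and do not supply this endpoint.

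The correct repair is not to force the endpoint down to $-1/2$ but to stop exactly where the reentrant corners allow, which is what the paper does. Fix $0<\varepsilon<\min\{\lambda_1-1/2,\,1/2\}$; then $z_f\in H^{3/2+\varepsilon}(\Omega)$, so $\partial_n z_f\in H^{\varepsilon}(\Gamma)$ with $\|\partial_n z_f\|_{H^{\varepsilon}(\Gamma)}\le C\|f\|_{H^{\varepsilon-1/2}(\Omega)}$, and a duality argument (using $H^{1/2-\varepsilon}(\Omega)=H^{1/2-\varepsilon}_0(\Omega)$) together with a density/Cauchy-sequence step gives $S\in\mathcal{L}(H^{-\varepsilon}(\Gamma),H^{1/2-\varepsilon}(\Omega))$. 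Since this pair and \eqref{E2.5} both sit on the ``$+\tfrac12$'' line, interpolating at $\theta=1/(1+2\varepsilon)$ instead of $\theta=1/2$ still lands exactly on $L^2(\Gamma)\to H^{1/2}(\Omega)$. Without replacing your unattainable endpoint by this shifted one, the argument cannot be completed.
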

\begin{proof}
Notice that for $0<\varepsilon<1/2$, we also have that, for $\theta = 1/(1+2\varepsilon)$,
\[L^2(\Gamma)=[H^{1/2}(\Gamma),H^{-\varepsilon}(\Gamma)]_{\theta}\mbox{ and }
H^{1/2}(\Omega)=[H^1(\Omega),H^{1/2-\varepsilon}(\Omega)]_{\theta}\]
and therefore the result will be true if we can prove that $S\in\mathcal{L}(H^{-\varepsilon}(\Gamma),H^{1/2-\varepsilon}(\Omega))$ for some $\varepsilon>0$.

\medskip

Fix $0<\varepsilon < \min\{\lambda_1-1/2,1/2\}$.
For any $u\in H^{-\varepsilon}(\Gamma)$, we will say that $y=Su$ if for every $f\in L^2(\Omega)$
\begin{equation}\int_\Omega y f dx = - \langle u,\partial_n z_f\rangle_{H^{-\varepsilon}(\Gamma),H^{\varepsilon}(\Gamma)}.\label{E2.7}\end{equation}
Notice that since $\varepsilon<\lambda_1-1/2$ we have that $z_f\in H^{3/2+\varepsilon}(\Omega)$ and that $\partial_n z_f\in \prod_{j=1}^M H^{\varepsilon}(\Gamma_j)=H^{\varepsilon}(\Gamma)$ because $\varepsilon<1/2$ (cf. \cite[Theorem 1.5.2.3(a)]{Grisvard85}). Moreover, due to the trace theorem and elliptic regularity, we have that
\begin{equation}\label{E2.8}
\|\partial_n z_f\|_{H^\varepsilon(\Gamma)}\leq C \|f\|_{H^{\varepsilon-1/2}(\Omega)}.
\end{equation}
Notice also that if $u\in H^{1/2}(\Gamma)$ then the unique variational solution $y\in H^1(\Omega)$ of
\[-\Delta y = 0\mbox{ in }\Omega,\ y=u\mbox{ on }\Gamma\]
is a solution of \eqref{E2.7} and if $u\in L^2(\Gamma)$, then \eqref{E2.7} is the same as \eqref{E2.4}.

Let us prove uniqueness of the solution of \eqref{E2.7} in $L^2(\Omega)$ first. If $u=0$ and $y=Su\in L^2(\Omega)$, then, taking $f=y$ as test function in \eqref{E2.7} we get $\int_\Omega y^2dx =0$, and therefore $y\equiv 0$. Since the problem is linear, the solution is unique.

We next prove existence of a solution $y\in H^{1/2-\varepsilon}(\Omega)$ of \eqref{E2.7}. We know (cf. \cite[Theorem 1.4.2.4]{Grisvard85}) that $H^{1/2-\varepsilon}(\Omega)= H^{1/2-\varepsilon}_0(\Omega)$ and hence
$\big(H^{1/2-\varepsilon}(\Omega)\big)' =H^{\varepsilon-1/2}(\Omega)$.
Denote
\[\mathcal{F} = \{f \in L^2(\Omega)\mbox{ such that }
\|f\|_{H^{\varepsilon-1/2}(\Omega)}=1\}.\]
For any $u\in H^{1/2}(\Gamma)$, $y=Su$, we use that $L^2(\Omega)$ is dense in $H^{\varepsilon-1/2}(\Omega)$ and \eqref{E2.8} to obtain
\begin{eqnarray}
\|y\|_{H^{1/2-\varepsilon}(\Omega)}& =& \sup_{f\in \mathcal{F}}\langle f ,y\rangle_{H^{\varepsilon-1/2}(\Omega), H^{1/2-\varepsilon}(\Omega)} =\sup_{f\in \mathcal{F}} \int_\Omega y f dx\nonumber \\
&=& \sup_{f\in \mathcal{F}} -\langle u,\partial_n z_f\rangle_{H^{-\varepsilon}(\Gamma),H^{\varepsilon}(\Gamma)}\leq \sup_{f\in \mathcal{F}}  \|\partial_n z_f\|_{H^{\varepsilon}(\Gamma) } \|u\|_{H^{-\varepsilon}(\Gamma)}\nonumber \\
&\leq &C \sup_{f\in \mathcal{F}}  \|f\|_{H^{\varepsilon-1/2}(\Omega)} \|u\|_{H^{-\varepsilon}(\Gamma)} = C \|u\|_{H^{-\varepsilon}(\Gamma)}.
\label{E2.9}
\end{eqnarray}
Take a sequence $u_k\in H^{1/2}(\Gamma)$, $u_k\to u\in H^{-\varepsilon}(\Gamma)$, and let $y_k=Su_k$. We have just proved that
\[\|y_k-y_m\|_{H^{1/2-\varepsilon}(\Omega)}\leq \|u_k-u_m\|_{H^{-\varepsilon}(\Gamma)}\]
and therefore $y_k$ converges in $H^{1/2-\varepsilon}(\Omega)$ to some $y\in H^{1/2-\varepsilon}(\Omega)$ that is (the unique) solution of the equation:
\begin{eqnarray*}
\int_\Omega y fdx& = &\int_\Omega \lim_k y_k f dx = \lim_k\int_\Omega y_k f dx \\
&= &\lim_k -\langle u_k,\partial_n z_f \rangle_{H^{-\varepsilon}(\Gamma),H^{\varepsilon}(\Gamma)} =  -\langle \lim_k u_k,\partial_n z_f \rangle_{H^{-\varepsilon}(\Gamma),H^{\varepsilon}(\Gamma)} \\
& = &- \langle u,\partial_n z_f \rangle_{H^{-\varepsilon}(\Gamma),H^{\varepsilon}(\Gamma)}.
\end{eqnarray*}
Finally, \eqref{E2.9} implies that $S\in\mathcal{L}(H^{-\varepsilon}(\Gamma),H^{1/2-\varepsilon}(\Omega))$ and the proof is complete.
\qquad%
\end{proof}

The next result is rather technical. It will be used to describe precisely the structure of the optimal state in the proof of Theorem \ref{T3.6} below. In that result we will be able to write the control as the sum of a regular part and a singular part. We show in Lemma 2.5 how to solve the state equation for singular boundary data. Besides the usual regular and singular parts that we described in Lemma \ref{L2.1}, a new singular part of the solution arises from the boundary data.

 Define the jump functions at the corners
\begin{equation}\label{E2.10}
\chi_j = \left\{\begin{array}{rl}
1&\mbox{ on }\{x\in\partial K_j:\ \theta_j=0\}\\
-1&\mbox{ on }\{x\in\partial K_j:\ \theta_j=\omega_j\}.
\end{array}\right.
\end{equation}
Notice that $\chi_j=1$ on $\Gamma_j$ and $\chi_j=-1$ on $\Gamma_{j-1}$.
\begin{lemma}\label{L2.5}Consider any pair of subsets $\mathbb{H}^1,\ \mathbb{H}^2\subset\{1,\ldots,M\}$ and
real numbers $-1/2<\eta_{j,1}$ and $a_{j,1}$ for all $j\in \mathbb{H}^1$ and $0<\eta_{j,2}$ and $a_{j,2}$ for all $j\in\mathbb{H}^2$ such that $\eta_{j,n}/\lambda_j\not\in\mathbb{Z}$ for any $j\in\mathbb{H}^n$, $n=1,2$. Define
\[u = \sum_ {j\in \mathbb{H}^1} a_{j,1} \xi_j r_j^{\eta_{j,1}} +\sum_ {j\in \mathbb{H}^2} \chi_j a_{j,2} \xi_j r_j^{\eta_{j,2}} \mbox{ on }\Gamma,\]
take $p$ such that
\[1< p<\inf\left\{\frac{2}{1-\eta_{j,n}}:\ j\in \mathbb{H}^n,\ n=1,2 \mbox{ and }\eta_{j,n}<1\right\},\]
where we consider $\inf\emptyset=+\infty$ and define $\mathbb{J}^m_p$ as in \eqref{E2.3}.
Then there exist unique real numbers $(c_{j,m})_{j\in \mathbb{J}^m_p}$, $m=1,2,3$ and a unique
solution $y\in H^{1/2}(\Omega)$ of equation \eqref{E1.1}
such that
\begin{equation}y = y_{reg}+\sum_{n=1}^2\sum_{j\in \mathbb{H}^n} a_{j,n}\xi_j r_j^{\eta_{j,n}}s_{j,n}(\theta_j)  +\sum_{m=1}^3\sum_{j\in \mathbb{J}^m_p} c_{j,m}\xi_jr_j^{m\lambda_j}\sin(m\lambda_j \theta_j),\label{E2.11}\end{equation}
where
\begin{equation}\label{E2.12}s_{j,n}(\theta_j)=\frac{(-1)^{n+1}-\cos(\eta_j\omega_j)}{\sin(\eta_j\omega_j)}\sin\left( \eta_j\theta_j\right) + \cos \left( \eta_j\theta_j\right)  \end{equation}
and $y_{reg}\in W^{2,p}(\Omega)$. If, further, $\eta_{j,1}>0$ for all $j\in \mathbb{H}^1$, then $y\in H^1(\Omega)$.
\end{lemma}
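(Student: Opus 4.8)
The plan is to construct the solution explicitly by separating the contributions of the boundary data near each corner and then invoking the established regularity machinery. First I would verify that each boundary term $\xi_j r_j^{\eta_{j,n}}$, when written in terms of the local angular variable, is consistent with the claimed angular profile $s_{j,n}(\theta_j)$. Concretely, the function $r_j^{\eta_{j,n}} s_{j,n}(\theta_j)$ is the unique homogeneous harmonic function of degree $\eta_{j,n}$ whose trace on the two sides of the cone $K_j$ matches the prescribed data: on $\Gamma_j$ (where $\theta_j=0$) it equals $r_j^{\eta_{j,n}}$ since $s_{j,n}(0)=\cos(0)=1$, and on $\Gamma_{j-1}$ (where $\theta_j=\omega_j$) a direct substitution into \eqref{E2.12} gives $s_{j,n}(\omega_j)=(-1)^{n+1}$, which produces exactly the sign $\chi_j$ encoded in \eqref{E2.10}. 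I would record that $\Delta(r_j^{\eta_{j,n}} s_{j,n}(\theta_j))=0$ in the cone, so these singular functions are genuine local solutions of the homogeneous equation; the cut-off $\xi_j$ then localizes them and introduces only a smooth compactly supported right-hand side error.

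The next step is the reduction. I would set $w=\sum_{n,j} a_{j,n}\xi_j r_j^{\eta_{j,n}} s_{j,n}(\theta_j)$ and consider $v=y-w$. Since each $s_{j,n}$ reproduces the boundary data exactly where $\xi_j=1$, the difference $v$ has homogeneous boundary data, $v=0$ on $\Gamma$, and satisfies $-\Delta v=\Delta w=:f$ in $\Omega$. Because the $\xi_j$ are constant near the corners, the commutator $\Delta w$ consists only of terms where derivatives fall on $\xi_j$, which vanish in a neighborhood of each $x_j$ and are therefore smooth and compactly supported; the only possible loss of integrability comes from the factor $r_j^{\eta_{j,n}-2}$-type behavior, but on the support of $\nabla\xi_j$ one has $r_j\ge R_j>0$, so in fact $f\in L^p(\Omega)$ for the stated range of $p$. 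Here the choice $p<2/(1-\eta_{j,n})$ is precisely what guarantees $\xi_j r_j^{\eta_{j,n}}\in W^{1,p}(\Omega)$ near the corner, hence $w$ has the asserted baseline regularity and $v$ lives in the space to which Lemma \ref{L2.1} applies.

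Then I would apply Lemma \ref{L2.1} to $v=z_f$: this yields $v=v_{reg}+\sum_{m=1}^3\sum_{j\in\mathbb{J}^m_p} c_{j,m}\,\xi_j r_j^{m\lambda_j}\sin(m\lambda_j\theta_j)$ with $v_{reg}\in W^{2,p}(\Omega)$ and uniquely determined coefficients $c_{j,m}$. Absorbing $v_{reg}$ into $y_{reg}$ and reassembling $y=w+v$ gives exactly the decomposition \eqref{E2.11}. The $H^{1/2}(\Omega)$ membership and well-posedness in the transposition sense follow from Theorem \ref{T2.4} once the trace is identified; uniqueness of the full solution follows from the uniqueness in Theorem \ref{T2.4} together with the uniqueness of the coefficients in Lemma \ref{L2.1}. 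For the final claim, if $\eta_{j,1}>0$ for all $j\in\mathbb{H}^1$ (and recalling $\eta_{j,2}>0$ always), every exponent $\eta_{j,n}$ is strictly positive, so each $\xi_j r_j^{\eta_{j,n}} s_{j,n}(\theta_j)\in H^1(\Omega)$, and combined with $v\in H^1(\Omega)$ one obtains $y\in H^1(\Omega)$.

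The main obstacle I anticipate is the careful bookkeeping at the two thresholds: first, confirming that the integrability of the corner term $\xi_j r_j^{\eta_{j,n}}$ (and its gradient) is governed exactly by the bound $p<2/(1-\eta_{j,n})$, so that $f=\Delta w$ lands in $L^p$ and Lemma \ref{L2.1} is applicable with the correct index set $\mathbb{J}^m_p$; and second, checking the non-resonance hypotheses $\eta_{j,n}/\lambda_j\notin\mathbb{Z}$, which ensure that the imposed boundary singularity $r_j^{\eta_{j,n}}$ is genuinely distinct from the operator-generated singularities $r_j^{m\lambda_j}$ and hence that the angular profile $s_{j,n}$ is well defined (the denominator $\sin(\eta_j\omega_j)$ in \eqref{E2.12} does not vanish). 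Once these two compatibility conditions are cleanly verified, the rest is an assembly of Lemma \ref{L2.1}, Lemma \ref{L2.3}, and Theorem \ref{T2.4}.
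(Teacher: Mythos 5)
Your proposal is correct and follows essentially the same route as the paper: identify $r_j^{\eta_{j,n}}s_{j,n}(\theta_j)$ as the harmonic function on the cone $K_j$ with trace $r_j^{\eta_{j,n}}$ (resp. $\chi_j r_j^{\eta_{j,n}}$), subtract the cut-off sum so that the remainder solves a homogeneous Dirichlet problem with right-hand side $f=\Delta w\in L^p(\Omega)$ coming only from the commutator with $\xi_j$, and apply Lemma \ref{L2.1}, with Theorem \ref{T2.4} and Lemma \ref{L2.3} supplying the $H^{1/2}(\Omega)$ and $H^1(\Omega)$ statements. The only cosmetic difference is your (correct) observation that the non-resonance condition is what keeps $\sin(\eta_j\omega_j)\neq 0$, which the paper leaves implicit.
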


\begin{proof}Since $\eta_{j,1}>-1/2$, we have that $u\in L^2(\Gamma)$ and $y\in H^{1/2}(\Omega)$ thanks to Theorem \ref{T2.4}. If also $\eta_{j,1}>0$, then $u\in H^{1/2}(\Gamma)$ and hence Lemma \ref{L2.3} gives us that $y\in H^1(\Omega)$.

Notice next that $\eta_{j,n}>-1/2$ implies that $2/(1-\eta_{j,n})>4/3$ and $p$ is well defined.

A direct computation shows that for $n=1,2$, $y_{j,n} = r_j^{\eta_{j,n}}s_{j,n}(\theta) $ are respectively the solutions of the problems
\[-\Delta y_{j,1} = 0\mbox{ in }K_j,\ y_{j,1}=r_j^{\eta_{j,1}}\mbox{ on }\partial K_j,\]
\[-\Delta y_{j,2} = 0\mbox{ in }K_j,\ y_{j,2}=\chi_j r_j^{\eta_{j,2}}\mbox{ on }\partial K_j.\]
Since $\Delta y_{j,1}=0$, we have that
\[\Delta\sum_{n=1}^2\sum_{j\in \mathbb{J}^n}\xi_jy_{j,n}=\sum_{j=1}^n\sum_{j\in \mathbb{J}^n}(y_{j,n}\Delta \xi_j+2\nabla y_{j,n}\nabla\xi_j).\]
Since $|\nabla y_{j,n}|\leq C r_j^{\eta_{j,n}-1}$, the condition imposed on $p$ implies that
\[f = \Delta\sum_{n=1}^2\sum_{j\in \mathbb{J}^n}a_{j,n}\xi_jy_{j,n}\in L^p(\Omega),\] and we can write $y = y_f+\sum_{n=1}^2\sum_{j\in \mathbb{H}^n}a_{j,n}\xi_jy_{j,n}$, where
\[-\Delta y_f = f\mbox{ in }\Omega,\ y_f=0\mbox{ on }\Gamma.\]
Applying Lemma \ref{L2.1} we obtain  that $y_f=y_{reg}+\sum_{m=1}^3\sum_{j\in \mathbb{J}^m_p}c_{j,m} \xi_jr_j^{m\lambda_j}\sin(m\lambda_j \theta_j)$ and the proof is complete.
\qquad\end{proof}

Although the maximum principle is a well known result for weak solutions of equation \eqref{E1.1} (see the celebrated paper by Stampacchia \cite{Stampacchia1965}), we have not been able to find a reference of its validity for solutions defined in the transposition sense \eqref{E2.4}. For the sake of completeness, we include such a result. First, we prove the following technical lemma.
\begin{lemma}\label{L2.6}Consider $f\in L^2(\Omega)$, $f\geq 0$ and let $z_f\in H^1_0(\Omega)$ be the solution of equation \eqref{E2.1}. Then $\partial_n z_f\leq 0$ a.e. on $\Gamma$.
\end{lemma}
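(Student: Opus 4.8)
The plan is to read off the sign of $\partial_n z_f$ by testing a Green identity against harmonic extensions of nonnegative boundary data, rather than through a pointwise Hopf-type argument which would be awkward to justify at the corners. First I would invoke Corollary~\ref{C2.2}: since $f\in L^2(\Omega)$ we have $z_f\in H^t(\Omega)$ for some $t>3/2$, so that $\partial_n z_f\in L^2(\Gamma)$ is a genuine function. Consequently the generalized Green formula
\[
\int_\Gamma \partial_n z_f\,\gamma v\,d\sigma=\int_\Omega\nabla z_f\cdot\nabla v\,dx-\int_\Omega f\,v\,dx
\]
is available for every $v\in H^1(\Omega)$, the boundary term being an honest $L^2(\Gamma)$ integral since $\gamma v\in H^{1/2}(\Gamma)\subset L^2(\Gamma)$ and $\partial_n z_f\in L^2(\Gamma)$.

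The core of the argument is the choice of $v$. I would fix $g\in H^{1/2}(\Gamma)$ with $g\ge 0$ and take $v\in H^1(\Omega)$ to be the variational solution of $-\Delta v=0$ in $\Omega$, $v=g$ on $\Gamma$ (which exists by Lax--Milgram on the Lipschitz domain $\Omega$). The classical weak maximum principle (Stampacchia) then gives $v\ge 0$ in $\Omega$. Because $v$ is harmonic and $z_f\in H^1_0(\Omega)$ is an admissible test function for the weak formulation of $v$, the first integral vanishes, $\int_\Omega\nabla z_f\cdot\nabla v\,dx=0$. Substituting this $g$ and $v$ into the Green formula yields
\[
\int_\Gamma \partial_n z_f\,g\,d\sigma=-\int_\Omega f\,v\,dx\le 0,
\]
since $f\ge 0$ and $v\ge 0$. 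Notice that this step does not even require the (also valid) inequality $z_f\ge 0$; only the nonnegativity of the harmonic extension $v$ and of $f$ is used.

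Finally I would pass from $H^{1/2}(\Gamma)$ to $L^2(\Gamma)$ by density. Given an arbitrary nonnegative $g\in L^2(\Gamma)$, I would approximate it in $L^2(\Gamma)$ by nonnegative functions $g_k\in H^{1/2}(\Gamma)$, for instance positive parts of smooth approximations, which are Lipschitz and hence lie in $H^{1/2}(\Gamma)$. Since $\partial_n z_f\in L^2(\Gamma)$, the inequality $\int_\Gamma \partial_n z_f\,g_k\,d\sigma\le 0$ survives the limit, giving $\int_\Gamma \partial_n z_f\,g\,d\sigma\le 0$ for all nonnegative $g\in L^2(\Gamma)$, whence $\partial_n z_f\le 0$ a.e.\ on $\Gamma$. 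The only genuinely delicate point is the validity of the Green formula with an $L^2$ boundary term right up to the corners of the polygon; this is exactly what Corollary~\ref{C2.2} secures by providing the extra regularity $z_f\in H^{t}(\Omega)$ with $t>3/2$, so I expect no further obstacle.
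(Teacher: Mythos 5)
Your proof is correct and is essentially the paper's own argument: both test the identity $\int_\Gamma \partial_n z_f\,g\,d\sigma=-\int_\Omega f\,v\,dx$ (obtained via Green's formula, using that the harmonic extension $v$ of a nonnegative boundary datum is nonnegative by Stampacchia's weak maximum principle and that $\int_\Omega\nabla z_f\cdot\nabla v\,dx=0$), and then conclude by density. The only cosmetic difference is that the paper starts from $u\in C^\infty(\Gamma)$ rather than $g\in H^{1/2}(\Gamma)$; your explicit justification of the $L^2(\Gamma)$ boundary term via Corollary~\ref{C2.2} is the same point the paper leaves implicit in ``integration by parts shows''.
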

\begin{proof}
Take $u\in C^\infty(\Gamma)$, $u\geq 0$. Thanks to Lemma \ref{L2.3}, the solution of equation \eqref{E1.1} satisfies  $y\in H^1(\Omega)$ and the maximum principle for weak solutions, as proved by Stampacchia \cite{Stampacchia1965}, holds. Therefore $y\geq 0$. Integration by parts shows then that
\[0\leq \int_\Omega y fdx = -\int_\Gamma u\partial_n z_f d\sigma(x)\]
and the result follows by the usual density argument.
\qquad\end{proof}
\begin{theorem}\label{T2.7}Consider $u\in L^\infty(\Gamma)$. Then the solution $y$ of equation \eqref{E1.1} belongs to $L^\infty(\Omega)$ and
\[\|y\|_{L^\infty(\Omega)}\leq \|u\|_{L^\infty(\Gamma)}.\]
\end{theorem}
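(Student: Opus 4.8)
The plan is to prove a comparison principle by combining the sign information on the normal derivative supplied by Lemma \ref{L2.6} with the linearity of the very weak formulation \eqref{E2.4} and the elementary observation that constant boundary data produce the corresponding constant state. Throughout I use that $u\in L^\infty(\Gamma)\subset L^2(\Gamma)$, so that by Theorem \ref{T2.4} the solution $y=Su$ exists in $H^{1/2}(\Omega)\subset L^2(\Omega)$ and \eqref{E2.4} holds for every $f\in L^2(\Omega)$.

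First I would establish monotonicity of the solution operator: if $u\geq 0$ a.e.\ on $\Gamma$, then $y=Su\geq 0$ a.e.\ in $\Omega$. To see this, pick any $f\in L^2(\Omega)$ with $f\geq 0$ and let $z_f$ solve \eqref{E2.1}. By Lemma \ref{L2.6} we have $\partial_n z_f\leq 0$ a.e.\ on $\Gamma$, so the right-hand side of \eqref{E2.4} satisfies $-\int_\Gamma u\,\partial_n z_f\,d\sigma\geq 0$, whence $\int_\Omega y f\,dx\geq 0$ for every nonnegative $f\in L^2(\Omega)$. Testing with the indicator $f=\mathbf{1}_{\{y<0\}}\in L^2(\Omega)$ then forces $|\{y<0\}|=0$, i.e.\ $y\geq 0$.

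Second, I would note that constant boundary data are mapped to the same constant. If $c\in\mathbb{R}$ and $u\equiv c$, then $u\in H^{1/2}(\Gamma)$, its variational solution is the constant $c$, and, by the consistency of the two notions of solution recorded in the proof of Theorem \ref{T2.4} (for $H^{1/2}(\Gamma)$ data the variational solution also solves \eqref{E2.4}), we get $Sc=c$. Writing $m=\|u\|_{L^\infty(\Gamma)}$ so that $-m\leq u\leq m$ a.e., I would apply the monotonicity step to the two nonnegative data $m-u$ and $u+m$. By linearity $S(m-u)=m-y$ and $S(u+m)=y+m$, so monotonicity gives $m-y\geq 0$ and $y+m\geq 0$; combining these yields $-m\leq y\leq m$ a.e., which is exactly $\|y\|_{L^\infty(\Omega)}\leq\|u\|_{L^\infty(\Gamma)}$.

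The genuinely substantial input is Lemma \ref{L2.6}, which already transfers the classical Stampacchia maximum principle for weak solutions into the sign $\partial_n z_f\leq 0$; once this is granted, the argument above is essentially elementary, so I do not expect a serious obstacle. The only remaining point requiring care is the identity $Sc=c$, and if one prefers not to invoke the consistency statement one may verify \eqref{E2.4} for the constant directly: with $z_f\in H^1_0(\Omega)$ and $\partial_n z_f\in L^2(\Gamma)$ (Corollary \ref{C2.2}), the generalized Green identity gives $\int_\Omega f\,dx=-\int_\Gamma\partial_n z_f\,d\sigma$, so $\int_\Omega c\,f\,dx=-\int_\Gamma c\,\partial_n z_f\,d\sigma$, confirming that the constant $c$ is the very weak solution with boundary datum $c$.
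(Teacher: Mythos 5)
Your proof is correct and follows essentially the same route as the paper: both arguments rest on Lemma \ref{L2.6} applied through the very weak formulation \eqref{E2.4}, together with the fact that constant boundary data yield the corresponding constant state (which the paper uses implicitly when it writes $\int_\Omega y_K f = -\int_\Gamma (u-K)\partial_n z_f$). The only difference is cosmetic: the paper tests once with $f=(y-K)^+$ to get $\int_\Omega ((y-K)^+)^2\le 0$ directly, whereas you first derive positivity preservation by testing with the indicator of $\{y<0\}$ and then apply it to $m\pm u$; both are valid.
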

\begin{proof}Define $K=\|u\|_{L^\infty(\Gamma)}$. We will prove that $y\leq K$ a.e. on $\Omega$, the proof for $-y\leq K$ being analogous.

We already know that $y\in H^{1/2}(\Omega)\hookrightarrow L^2(\Omega)$ (cf. Theorem \ref{T2.4}). Define $y_K=y-K$ and $y_K^+=\max(y_K,0)\in L^2(\Omega)$. For every $f\in L^2(\Omega)$ we deduce from \eqref{E2.1} that
\[\int_\Omega y_K f dx = -\int_\Gamma (u-K)\partial_n z_f d\sigma(x)\]
We take $f = y_K^+\geq 0$. For this choice of $f$, we  know from Lemma \ref{L2.6} that $\partial_n z_f\leq 0$ a.e. on $\Gamma$ and therefore
\[0\leq \int_\Omega (y_K^{+})^2dx =\int_\Omega y_K y_K^+dx =
-\int_\Gamma (u-K)\partial_n z_f d\sigma(x)\leq 0.\]
So $y_K^+\equiv 0$ and $y\leq K$.
\qquad\end{proof}
\section{\label{sec:3}Main regularity results for the control problem}
\setcounter{equation}{0}
The following result is standard and the proof can be found in \cite{Casas-Raymond2006}. Though in that reference only convex domains are taken into account, this result is independent of the convexity of the domain, once we have proved Corollary \ref{C2.2} and Theorem \ref{T2.4}. Here and in the rest of the paper, $\proj_{[a,b]}(c)=\min\{b,\max\{a,c\}\}$ for any real numbers $a,b,c$.

\begin{lemma}\label{T3.1}Suppose $y_\Omega\in L^2(\Omega)$. Then problem (P) has a unique solution $\bar u\in L^2(\Gamma)$ with related state $\bar y\in H^{1/2}(\Omega)$ and adjoint state $\bar\varphi\in H^1_0(\Omega)$. The following optimality system is satisfied:
\begin{eqnarray}\bar u(x) &=& \proj_{[a,b]}\left(\frac{1}{\nu}\partial_n\bar\varphi(x)\right)\mbox{ for a.e. }x\in \Gamma,
\label{E3.1}\\
-\Delta \bar y &=& 0\mbox{ in }\Omega,\; \bar y=\bar u\mbox{ on }\Gamma,
\label{E3.2}\\
-\Delta\bar \varphi &=& \bar y-y_\Omega \mbox{ in }\Omega,\; \bar \varphi=0\mbox{ on }\Gamma.
\label{E3.3}\end{eqnarray}
\end{lemma}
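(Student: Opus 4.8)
The plan is to treat this as a standard linear--quadratic convex optimization problem over the Hilbert space $L^2(\Gamma)$, exploiting the linearity and continuity of the control-to-state operator $S$ established in Theorem \ref{T2.4}, and to identify the adjoint state through the transposition formula \eqref{E2.4}. First I would settle existence and uniqueness. By Theorem \ref{T2.4}, $S\in\mathcal{L}(L^2(\Gamma),H^{1/2}(\Omega))$, and since $H^{1/2}(\Omega)\hookrightarrow L^2(\Omega)$, the reduced functional
\[ J(u)=\tfrac12\|Su-y_\Omega\|_{L^2(\Omega)}^2+\tfrac\nu2\|u\|_{L^2(\Gamma)}^2 \]
is well defined, continuous, and convex on $L^2(\Gamma)$; the term $\tfrac\nu2\|u\|_{L^2(\Gamma)}^2$ with $\nu>0$ makes it strictly convex. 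The admissible set $U_{ad}$ is nonempty (as $a<b$), convex, closed, and bounded in $L^2(\Gamma)$, hence weakly sequentially compact. A convex continuous functional is weakly lower semicontinuous, so $J$ attains its infimum on $U_{ad}$, and strict convexity yields a unique minimizer $\bar u$. The regularity $\bar y=S\bar u\in H^{1/2}(\Omega)$ then follows at once from Theorem \ref{T2.4}.

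Next I would derive the first-order conditions. Since $J$ is G\^ateaux differentiable and convex and $U_{ad}$ is convex, $\bar u$ is characterized by the variational inequality $J'(\bar u)(u-\bar u)\ge 0$ for all $u\in U_{ad}$, where
\[ J'(\bar u)v=\int_\Omega (S\bar u-y_\Omega)(Sv)\,dx+\nu\int_\Gamma \bar u\,v\,d\sigma. \]
I would then introduce $\bar\varphi$ as the solution of the adjoint equation \eqref{E3.3}. Since $\bar y-y_\Omega\in L^2(\Omega)$, Corollary \ref{C2.2} gives $\bar\varphi\in H^t(\Omega)$ for every $t<1+\lambda_1$, so in particular $\bar\varphi\in H^1_0(\Omega)$ and $\partial_n\bar\varphi\in L^2(\Gamma)$, which furnishes the claimed adjoint regularity.

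The decisive step is to rewrite the first summand of $J'(\bar u)v$ by transposition. Applying the definition \eqref{E2.4} of the very weak solution $w=Sv$ with the choice $f=\bar y-y_\Omega$, so that the associated $z_f$ is exactly $\bar\varphi$, yields
\[ \int_\Omega (Sv)(\bar y-y_\Omega)\,dx=-\int_\Gamma v\,\partial_n\bar\varphi\,d\sigma, \]
whence $J'(\bar u)v=\int_\Gamma(\nu\bar u-\partial_n\bar\varphi)\,v\,d\sigma$. The variational inequality thus reduces to $\int_\Gamma(\nu\bar u-\partial_n\bar\varphi)(u-\bar u)\,d\sigma\ge 0$ for all $u\in U_{ad}$, and the usual pointwise argument over the interval $[a,b]$ converts this into the projection formula \eqref{E3.1}.

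I expect the main obstacle to be the rigorous justification of the transposition identity in the non-convex setting. It is precisely here that one must invoke Corollary \ref{C2.2} to guarantee $\partial_n\bar\varphi\in L^2(\Gamma)$, so that the boundary integral is well defined and the definition \eqref{E2.4} is genuinely applicable with the $L^2$ datum $f=\bar y-y_\Omega$; on a non-convex domain $\bar\varphi$ need not lie in $H^2(\Omega)$, and the entire derivation hinges on the improved normal-trace regularity established earlier. Once this identity is in place, the passage from the variational inequality to the projection relation, and the verification that \eqref{E3.2} holds by construction of $\bar y=S\bar u$, are routine.
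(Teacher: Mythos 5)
Your proof is correct and follows exactly the route the paper intends: the paper does not write out the argument but cites the standard proof of Casas and Raymond, remarking only that it extends to non-convex domains once Corollary \ref{C2.2} and Theorem \ref{T2.4} are available, and these are precisely the two ingredients you invoke (Theorem \ref{T2.4} for $S\in\mathcal{L}(L^2(\Gamma),H^{1/2}(\Omega))$ and Corollary \ref{C2.2} for $\partial_n\bar\varphi\in L^2(\Gamma)$, which makes the transposition identity and hence the projection formula rigorous). Nothing is missing.
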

As in the proof of Theorem \ref{T2.4}, the solution of the state equation \eqref{E3.2} must be understood in the transposition sense, whereas the adjoint state equation \eqref{E3.3} has a variational solution.

Next we state a regularity result for the  adjoint state in the framework of classical Sobolev--Slobodetski\u\i{} spaces.  In the rest of this section we will suppose that $y_\Omega\in L^{s^*}(\Omega)$ where, $2\leq s^*<+\infty$ satisfies
\begin{equation}\label{E3.4}\frac{2(s^*-1)}{\lambda_j s^*}\not\in\mathbb{Z}\ \forall j\in\{1,\ldots,M\}.\end{equation}

\begin{theorem}\label{T3.2}There exist a unique function $\bar\varphi_{reg}\in W^{2,s^*}(\Omega)$ and unique real numbers $(\hat c_{j,m})_{j\in \mathbb{J}_{s^*}^m}$, where $\mathbb{J}_{s^*}^m$ is defined in \eqref{E2.3},  such that
\begin{equation}\label{E3.5}\bar\varphi = \bar\varphi_{reg} +\sum_{m=1}^3\sum_{j\in\mathbb{J}^m_{s^*}} \hat c_{j,m}\xi_j r_j^{m\lambda_j}\sin(m\lambda_j\theta_j).\end{equation}
\end{theorem}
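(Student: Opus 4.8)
The plan is to identify the adjoint state $\bar\varphi$ with the solution $z_f$ of the homogeneous Dirichlet problem \eqref{E2.1} for the right-hand side $f = \bar y - y_\Omega$, and then to invoke Lemma \ref{L2.1} with the choice $p = s^*$. Two things must be checked to apply that lemma: that hypothesis \eqref{E2.2} holds for $p = s^*$, and that $f\in L^{s^*}(\Omega)$. The first is immediate, because \eqref{E2.2} with $p = s^*$ reads $2(s^*-1)/(s^*\lambda_j)\notin\mathbb{Z}$ for all $j$, which is precisely the standing assumption \eqref{E3.4}. So the entire argument reduces to verifying the membership $f\in L^{s^*}(\Omega)$; since $y_\Omega\in L^{s^*}(\Omega)$ by hypothesis, this amounts to showing $\bar y\in L^{s^*}(\Omega)$.

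The hard part will be exactly this last regularity of $\bar y$. A priori Theorem \ref{T2.4} only gives $\bar y\in H^{1/2}(\Omega)$, and in two dimensions the Sobolev embedding yields no better than $\bar y\in L^q(\Omega)$ for $q\le 4$, which is insufficient once $s^* > 4$. The observation that unlocks the proof is that the optimal control $\bar u$ belongs to the admissible set $U_{ad}$, so $a\le\bar u\le b$ a.e.\ on $\Gamma$ and in particular $\bar u\in L^\infty(\Gamma)$. Applying the maximum principle for very weak solutions (Theorem \ref{T2.7}) to the state equation \eqref{E3.2}, I would then conclude $\bar y\in L^\infty(\Omega)$ with $\|\bar y\|_{L^\infty(\Omega)}\le\|\bar u\|_{L^\infty(\Gamma)}\le\max\{|a|,|b|\}$. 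Hence $\bar y\in L^{s^*}(\Omega)$ for every finite $s^*$, and therefore $f = \bar y - y_\Omega\in L^{s^*}(\Omega)$.

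With $f\in L^{s^*}(\Omega)$ established and \eqref{E3.4} guaranteeing \eqref{E2.2}, Lemma \ref{L2.1} applies verbatim to $\bar\varphi = z_f$ and produces unique real numbers $(c_{j,m})_{j\in\mathbb{J}^m_{s^*}}$, $m=1,2,3$, together with a unique regular part $z_{reg}\in W^{2,s^*}(\Omega)$ satisfying $\bar\varphi = z_{reg} + \sum_{m=1}^3\sum_{j\in\mathbb{J}^m_{s^*}} c_{j,m}\xi_j r_j^{m\lambda_j}\sin(m\lambda_j\theta_j)$. Setting $\bar\varphi_{reg} := z_{reg}$ and $\hat c_{j,m} := c_{j,m}$ gives the claimed expansion \eqref{E3.5}, and the uniqueness asserted in the theorem is inherited directly from the uniqueness in Lemma \ref{L2.1}. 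Apart from the $H^{1/2}\to L^\infty$ bootstrap for $\bar y$, which leans essentially on the control constraints and the maximum principle rather than on elliptic regularity alone, everything is a direct corollary of the linear theory already in hand.
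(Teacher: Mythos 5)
Your argument is correct and follows exactly the paper's own proof: the control constraints give $\bar u\in L^\infty(\Gamma)$, Theorem \ref{T2.7} then yields $\bar y\in L^\infty(\Omega)$ so that $\bar y-y_\Omega\in L^{s^*}(\Omega)$, and Lemma \ref{L2.1} (with \eqref{E3.4} supplying hypothesis \eqref{E2.2}) delivers the expansion \eqref{E3.5}. No gaps.
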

\begin{proof}
Since the problem is control constrained,  $\bar u\in L^\infty(\Gamma)$, and by the maximum principle proved in Theorem \ref{T2.7}, $\bar y\in L^\infty(\Omega)$. Therefore, $\bar y-y_\Omega\in L^{s^*}(\Omega)$, and we can use Lemma \ref{L2.1}: since the related adjoint state $\bar\varphi$ is the solution of \eqref{E3.3}
we have that there exist unique $\bar\varphi_{reg}\in W^{2,s^*}(\Omega)$ and  $(\hat c_{j,m})_{j\in \mathbb{J}_s^m}$ such that
 relation \eqref{E3.5} holds.
\qquad\end{proof}

For any $s\geq 2$ we define the set
\[\mathbb{H}^1_s=\{j\in\mathbb{J}^1_{s}:\ \lambda_j>1\}\]
and for $s\geq 2$ and $m=2,3$
\[\mathbb{H}^m_s=\{j\in\mathbb{J}^m_{s}:\  \hat c_{j,1}=0\},\]
where the coefficients $\hat c_{j,m}$ are the coefficients obtained in Theorem \ref{T3.2}.
Notice that the indexes in $\mathbb{H}^1_s$ correspond to convex corners. The indexes in $\mathbb{H}^m_s$ correspond to those non-convex corners where the main part of the singularity of the adjoint state vanishes, and hence the behavior of the solution at those non-convex corners can be somehow compared to the behavior of the solution at the convex corners. Notice also that
\[\cup_{m=1}^3\mathbb{J}^m_s = \cup_{m=1}^3\mathbb{H}^m_s\bigcup
\{j\in\mathbb{J}^1_s:\lambda_j<1\mbox{ and }\hat c_{j,1}\neq 0\}\]
Consider also $p\geq 2$ such that for $m=1,2,3$
\begin{equation}\label{E3.6}
p\leq s^*,\ p<\frac{2}{2-m\lambda_j}\mbox{ if } j\in \mathbb{H}^m_{s^*}.
\end{equation}
This condition on $p$ appears in a natural way in the proof of Theorem \ref{T3.4}, see \eqref{E3.10}. If $s^*>2$, then we can choose $p>2$. With this choice we have that
\begin{lemma}\label{L3.3}Let $p$ satisfy \eqref{E3.6} and for $m=1,2,3$ consider $j\in \mathbb{H}^m_{s^*}$. Then
\begin{equation}\label{E3.7}\xi_j r_j^{m\lambda_j-1}\in W^{1-1/p,p}(\Gamma).
\end{equation}
\end{lemma}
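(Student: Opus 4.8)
The plan is to use the characterization of $W^{1-1/p,p}(\Gamma)$ recalled just before Lemma \ref{L2.3}: a function belongs to this space precisely when its restriction to each side $\Gamma_i$ lies in $W^{1-1/p,p}(\Gamma_i)$ and, for $p>2$, it is continuous at every corner (for $p=2$ the continuity is replaced by the weaker integral condition of \cite[Theorem 1.5.2.3(c)]{Grisvard85}). Since $\xi_j$ is supported in $\{r_j<2R_j\}$, the function $g:=\xi_j r_j^{m\lambda_j-1}$ vanishes identically on every side except $\Gamma_j$ and $\Gamma_{j-1}$, and on each of these two sides it equals the one-variable profile $r\mapsto\xi_j(r)\,r^{m\lambda_j-1}$, where $r$ is arclength measured from $x_j$. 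Thus only two things must be checked: continuity of $g$ at the corners, and membership of this profile in $W^{1-1/p,p}$ of an interval.

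For continuity, first I would observe that the profile tends to $0$ as $r\to0^+$ as soon as $m\lambda_j-1>0$, so it suffices to verify $m\lambda_j>1$ for every $j\in\mathbb{H}^m_{s^*}$. For $m=1$ this is immediate, since by definition $\mathbb{H}^1_{s^*}$ only contains indices with $\lambda_j>1$. For $m=2,3$ I would use the universal bound $\lambda_j=\pi/\omega_j>1/2$ (valid because $\omega_j<2\pi$), which already gives $m\lambda_j\geq2\lambda_j>1$. Hence on both $\Gamma_j$ and $\Gamma_{j-1}$ the function $g$ attains the common limit $0$ at $x_j$, so it is continuous there; at the remaining endpoints the cut-off $\xi_j$ vanishes, so $g\equiv0$ in a neighborhood of every other corner. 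This settles the corner condition (and a fortiori the weaker $p=2$ variant).

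For the sidewise regularity, I would reduce to the behavior near $r=0$: multiplication by the smooth, compactly supported factor $\xi_j$ preserves $W^{s,p}$ for $0<s<1$, so the question is whether $r^{m\lambda_j-1}$ lies in $W^{1-1/p,p}$ on a one-sided neighborhood of the origin. The key computation is the elementary sharp criterion that $r^{\alpha}\in W^{s,p}(0,L)$ for $0<s<1$ if and only if $\alpha>s-1/p$, which is verified directly by estimating the Gagliardo double integral near the diagonal and near $r=0$ (equivalently, by interpolating between the thresholds $\alpha>-1/p$ for $L^p$ and $\alpha>1-1/p$ for $W^{1,p}$). Applying it with $\alpha=m\lambda_j-1$ and $s=1-1/p$ yields the requirement $m\lambda_j-1>1-2/p$, i.e.\ $m\lambda_j>2-2/p$, which is exactly the rearrangement of the hypothesis $p<2/(2-m\lambda_j)$ imposed in \eqref{E3.6}.

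I expect the main obstacle to be the sharp power-function criterion in the third paragraph: obtaining the exact threshold $\alpha>s-1/p$ (rather than a sufficient but non-sharp condition) is what makes \eqref{E3.6} not merely sufficient but tight, and it is the only place where a genuine fractional-order estimate, as opposed to bookkeeping with the definitions of $\mathbb{H}^m_{s^*}$, is needed. Everything else---the support properties of $\xi_j$, the continuity check, and the reduction to a single boundary profile---is routine once the characterization of $W^{1-1/p,p}(\Gamma)$ is in hand.
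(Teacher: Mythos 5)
Your proof is correct, but it takes a genuinely different route from the paper's. The paper never works intrinsically on the boundary: it shows that the two\--dimensional function $u_j=r_j^{m\lambda_j-1}$ belongs to $W^{1,p}(N_j)$ --- boundedness (from $m\lambda_j>1$) gives $u_j\in L^p(N_j)$, and the gradient bound $|\nabla u_j|=(m\lambda_j-1)r_j^{m\lambda_j-2}$ together with the polar\--coordinate integral $\int_0^{2R_j}r^{(m\lambda_j-2)p+1}\,dr$ produces exactly the condition $p<2/(2-m\lambda_j)$ --- and then invokes the trace theorem $W^{1,p}(N_j)\to W^{1-1/p,p}(\partial N_j)$, plus the smoothness of the cut-off, to land in $W^{1-1/p,p}(\Gamma)$. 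You instead stay on $\Gamma$, use the piecewise characterization of $W^{1-1/p,p}(\Gamma)$ recalled before Lemma \ref{L2.3}, and rely on the sharp one\--dimensional criterion $r^\alpha\in W^{s,p}(0,L)$ iff $\alpha>s-1/p$, proved by a Gagliardo\--seminorm estimate. Both arguments hit the same threshold (the trace theorem converts the 2D exponent condition $(m\lambda_j-2)p+2>0$ into precisely your 1D one), so neither is sharper; the paper's version is shorter because its only genuine estimate is an elementary two\--dimensional integral and the fractional order is absorbed by the trace theorem, while yours is self\--contained on the boundary and makes visible where the exponent $1-1/p$ actually bites. The one place you are slightly glib is the $p=2$ case: continuity at a corner does not in general imply Grisvard's integral compatibility condition, so it is not literally ``a fortiori''; here, however, the two one\--sided traces at $x_j$ are the same profile $\xi_j(\sigma)\sigma^{m\lambda_j-1}$ in the arclength variable $\sigma$, so their difference vanishes identically and the integral condition holds trivially, while the cut-off kills $g$ near every other corner.
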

\begin{proof}Take $j\in \cup_{m=1}^3\mathbb{H}^m_{s^*}$ and consider $N_j$ the bounded cone of radius $2R_j$ defined in Section \ref{sec:2}. We first prove that $u_j=r_j^{m\lambda_j-1}\in W^{1,p}(N_j)$.

Since $j\in \cup_{m=1}^3\mathbb{H}^m_{s^*}$, then $m\lambda_j>1$, so $m\lambda_j-1>0$ and $r_j^{m\lambda_j-1}\in C(\bar N_j)\subset L^p(N_j)$. On the other hand $|\nabla u_j| = (m\lambda_j-1)r_j^{m\lambda_j-2}$, and making the usual change of variables to polar coordinates, we have
\begin{eqnarray*} \int\int_{N_j}|\nabla u_j|^p dx &=& (m\lambda_j-1)^p\omega_j\int_0^{2R_j} r_j r_j^{(m\lambda_j-2)p}dr,
\end{eqnarray*}
the last integral being convergent if and only if $(m\lambda_j-2)p+1>-1$. Taking into account that $j\in \mathbb{J}^m_{s^*}$ implies $m\lambda_j<2$, the previous condition is fulfilled if and only if $p<\frac{2}{2-m\lambda_j}$, which is the assumption.

Relation \eqref{E3.7} now follows from the smoothness of the cut-off function, the trace theorem and the continuity of $\xi_j r_j^{m\lambda_j-1}$.
\qquad\end{proof}

\begin{theorem}\label{T3.4}Let $p$ satisfy \eqref{E3.6}. Then, the optimal control $\bar u$ belongs to
$W^{1-1/p,p}(\Gamma)$,
the optimal state $\bar y$ belongs to $W^{1,q}(\Omega)$ for all $q\leq p$, $q<p_D$. In particular, if $s^*>2$, both are continuous functions.
\end{theorem}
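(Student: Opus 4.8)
The plan is to read off the regularity of $\bar u$ from the projection formula \eqref{E3.1}, i.e.\ from $\psi:=\tfrac1\nu\partial_n\bar\varphi$ on $\Gamma$, and then to obtain $\bar y$ from Lemma \ref{L2.3}. Starting from the decomposition \eqref{E3.5} of Theorem \ref{T3.2}, the regular part is harmless: since $\bar\varphi_{reg}\in W^{2,s^*}(\Omega)$, the trace theorem gives $\partial_n\bar\varphi_{reg}\in W^{1-1/s^*,s^*}(\Gamma)$, which embeds into $W^{1-1/p,p}(\Gamma)$ because $p\le s^*$. Thus everything hinges on the normal traces of the singular functions $\xi_j r_j^{m\lambda_j}\sin(m\lambda_j\theta_j)$, which I would compute explicitly. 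Writing the gradient in polar coordinates and using $\lambda_j\omega_j=\pi$, on the pieces where $\xi_j\equiv1$ one finds $\partial_n\big(\xi_j r_j^{m\lambda_j}\sin(m\lambda_j\theta_j)\big)=-m\lambda_j r_j^{m\lambda_j-1}$ on $\Gamma_j$ and $(-1)^m m\lambda_j r_j^{m\lambda_j-1}$ on $\Gamma_{j-1}$, the contributions from $\nabla\xi_j$ being smooth. The decisive feature is that for the leading term $m=1$ the two boundary values carry the same sign.

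I would then split the corners into two groups. The ``good'' corners are those $j\in\cup_{m}\mathbb{H}^m_{s^*}$: there every singular exponent actually occurring satisfies $m\lambda_j-1>0$ (for convex corners only $m=1$ occurs and $\lambda_j>1$; for the reentrant corners in $\mathbb{H}^2_{s^*},\mathbb{H}^3_{s^*}$ the leading coefficient vanishes and one uses $\lambda_j>1/2$), and condition \eqref{E3.6} is exactly what Lemma \ref{L3.3} needs to conclude $\xi_j r_j^{m\lambda_j-1}\in W^{1-1/p,p}(\Gamma)$. Hence, on any part of $\Gamma$ staying away from the remaining corners, $\psi\in W^{1-1/p,p}(\Gamma)$. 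The ``bad'' corners are the reentrant ones with $\lambda_j<1$ and $\hat c_{j,1}\ne0$; there the leading normal trace behaves like $-\tfrac1\nu\hat c_{j,1}\lambda_j r_j^{\lambda_j-1}$ on both adjacent sides, which tends to $\pm\infty$ (one fixed sign, the same on $\Gamma_j$ and $\Gamma_{j-1}$) as $r_j\to0$, the other terms staying bounded.

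The projection \eqref{E3.1} now does the work. On regions where $\psi\in W^{1-1/p,p}(\Gamma)$, the identity $\bar u=\proj_{[a,b]}(\psi)$ preserves this regularity: $\proj_{[a,b]}$ is $1$-Lipschitz, so it leaves the $L^p$ bound obvious and does not increase the Slobodetski\u\i{} seminorm, since $|\proj_{[a,b]}(\psi)(x)-\proj_{[a,b]}(\psi)(y)|\le|\psi(x)-\psi(y)|$ pointwise. Near a bad corner, instead, $\psi$ leaves the interval $[a,b]$ for $r_j$ small, so $\bar u$ equals the constant $a$ or $b$ on a whole one-sided neighborhood; because the blow-up has the same sign on both incident sides, it is the same constant, so $\bar u$ is not only locally smooth but also continuous across $x_j$. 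To assemble these local statements into $\bar u\in W^{1-1/p,p}(\Gamma)$ I would use the characterization recalled before Lemma \ref{L2.3}: a smooth partition of unity on each side $\Gamma_i$ separates a constant piece near each bad endpoint from a piece where $\psi\in W^{1-1/p,p}$, and multiplication by smooth cut-offs preserves $W^{1-1/p,p}(\Gamma_i)$; continuity at every corner has just been checked (at good corners $r_j^{m\lambda_j-1}\to0$, so $\psi$ and hence $\bar u$ are continuous there).

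Finally, $\bar u\in W^{1-1/p,p}(\Gamma)$ with $p\ge2$ is exactly the boundary datum handled by Lemma \ref{L2.3} (with $p^*=p$), which yields $\bar y\in W^{1,q}(\Omega)$ for all $q\le p$, $q<p_D$; and when $s^*>2$ one may take $p>2$, whence $W^{1-1/p,p}(\Gamma)\hookrightarrow C(\Gamma)$ and $W^{1,q}(\Omega)\hookrightarrow C(\bar\Omega)$ in two dimensions give the asserted continuity. I expect the genuine obstacle to be the bad (reentrant) corners: one must verify that the leading singularity is one-signed and carries identical signs on the two adjacent sides---this is where the explicit normal-derivative computation and the choice of $\sin(m\lambda_j\theta_j)$ matter---and then glue the resulting locally constant pieces to the fractional-Sobolev pieces without destroying the nonlocal seminorm, the endpoint $s^*=2$ (where corner continuity must be replaced by the integral condition of \cite[Theorem 1.5.2.3(c)]{Grisvard85}) being the most delicate.
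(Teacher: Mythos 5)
Your proposal is correct and follows essentially the same route as the paper's proof: explicit computation of the normal traces of the singular functions from \eqref{E3.5}, the case split between the corners in $\cup_m\mathbb{H}^m_{s^*}$ (handled via Lemma \ref{L3.3} and condition \eqref{E3.6}) and the reentrant corners with $\hat c_{j,1}\neq0$ (where the one-signed blow-up forces $\bar u$ to be constant near the corner), followed by Lemma \ref{L2.3} for the state. Your additional remarks on the Lipschitz stability of the projection under the Slobodetski\u\i{} seminorm and the partition-of-unity gluing only make explicit steps the paper leaves implicit.
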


\begin{proof}
We will exploit the projection relation \eqref{E3.1} and the expression for the adjoint state obtained in \eqref{E3.5}.

Notice first that $\bar\varphi_{reg}\in W^{2,s^*}(\Omega)$ and $\bar\varphi_{reg}=0$ on $\Gamma$, so $\partial_n\bar\varphi_{reg}\in W^{1-1/s^*,s^*}(\Gamma)$ (cf. \cite[Lemma A.2]{Casas-Mateos-Raymond-2009} for the case $s^*=2$ or \cite{Casas-Gunther-Mateos2011} for the case $s^*>2$). Moreover, if $s^*>2$, then
 $\partial_n\bar\varphi_{reg}(x_j)=0$ on every corner the normal derivative of the regular part is a continuous function on $\Gamma$.

We are going to compute now the normal derivative of the singular part. For any $m\in\{1,2,3\}$ and $j\in \mathbb{J}^m_{s^*}$, we have $\partial_n \xi_j r_j^{m\lambda_j}\sin(m\lambda_j\theta_j)\in  C^\infty(\Gamma\setminus\{x_j\})$, so we have that for every compact set $\mathcal{K}\subset\Gamma\setminus\{x_j:\ j\in \mathbb{J}^1_{s^*}\}$
\[\partial_n\bar\varphi \in W^{1-1/s^*,s^*}(\mathcal{K}).\]
Near the corners, for $r_j<R_j$
 we have on $\Gamma_j$ (where $\theta_j=0$) that
\begin{eqnarray}
\partial_n r_j^{m\lambda_j}\sin(m\lambda_j\theta_j)\xi_j &=&
-\frac{1}{r_j}\partial_\theta r_j^{m\lambda_j}\sin(m\lambda_j\theta_j)\nonumber \\
& =& - m\lambda_j r_j^{m\lambda_j-1}\cos(m\lambda_j 0)
 =-m\lambda_j r_j^{m\lambda_j-1} \label{E3.8}
\end{eqnarray}
and on $\Gamma_{j-1}$ (where $\theta_j=\omega_j$) that
\begin{eqnarray}
\partial_n r_j^{m\lambda_j}\sin(m\lambda_j\theta_j)\xi_j &=&
\frac{1}{r_j}\partial_\theta r_j^{m\lambda_j}\sin(m\lambda_j\theta_j)\nonumber \\
&=&m\lambda_j r_j^{m\lambda_j-1}\cos\left(m\frac{\pi}{\omega_j} \omega_j\right) =(-1)^m m\lambda_j r_j^{m\lambda_j-1}.\label{E3.9}
\end{eqnarray}
Next we will distinguish two cases.

Case 1: if $j\in \cup_{m=1}^3\mathbb{H}^m_{s^*}$ then $m\lambda_j>1$ and hence
the limit of both expressions is zero as $r_j\to 0$.
Noticing \eqref{E3.7}, we have that the choice of the exponent $p$ made in \eqref{E3.6} gives us
\begin{equation}\label{E3.10}\partial_n\left(\bar\varphi_{reg}+
\sum_{m=1}^3\sum_{j\in \mathbb{H}^m_{s^*}} \hat c_{j,m}\xi_j r_j^{m\lambda_j}\sin(m\lambda_j\theta_j)\right)\in W^{1-1/p,p}(\Gamma).
\end{equation}
So far, we can deduce that, for every compact set $\mathcal{K}\subset \Gamma\setminus \{x_j:\ \lambda_j<1,
\hat c_{j,1}\neq 0\}$
\begin{equation}\label{E3.11}\partial_n\bar\varphi \in W^{1-1/p,p}(\mathcal{K}).\end{equation}

Case 2: Now $j\in \mathbb{J}^1_{s^*}$, $\lambda_j<1$, and $\hat c_{j,1}\neq 0$.
We have $(-1)^m=-1$ and $m\lambda_j-1<0$, therefore using expressions \eqref{E3.8} and \eqref{E3.9} we have
\[\lim_{x\to x_j}\partial_n \hat c_{j,1} r_j^{\lambda_j}\sin(\lambda_j\theta_j)\xi_j = -\sign(\hat c_{j,1})\infty.\]
If it happens that also $j\in \mathbb{J}^2_{s^*}\cup \mathbb{J}^3_{s^*}$, we have that for $m=2$ and $m=3$, $m\lambda_j-1>0$ and again the limit of both \eqref{E3.8} and \eqref{E3.9} is zero. So we have that
\[\lim_{x\to x_j}\partial_n \sum_{m=1}^3\hat c_{j,m} r_j^{m\lambda_j}\sin(m\lambda_j\theta_j)\xi_j = -\sign(\hat c_{j,1})\infty.\]
If $s^*>2$, also on this corner
\[\lim_{x\to x_j}\partial_n\bar\varphi_{reg}(x)=0,\]
and,  trivially
\begin{eqnarray}
\lim_{x\to x_j}\partial_n\bar\varphi(x)&=&
\lim_{x\to x_j}\left( \partial_n\bar\varphi_{reg}(x)+
\partial_n \sum_{m=1}^3\hat c_{j,m} r_j^{m\lambda_j}\sin(m\lambda_j\theta_j)\xi_j\right)\notag\\
&=&-\sign(\hat c_{j,1})\infty.
\label{E3.12}
\end{eqnarray}
If $s^*=2$, as we said at the beginning of the proof, $\partial_n\bar\varphi_{reg}\in H^{1/2}(\Gamma)$, so it needs not be even a bounded function. Nevertheless, since the singular part behaves like a negative power of $r_j$, this term dominates and we also have that \eqref{E3.12} holds.

As a consequence, there exists $\rho_j>0$ such that for $x\in\Gamma$ with $|x-x_j|<\rho_j$ either
$\proj_{[a,b]}\partial_n\bar\varphi\equiv a$ or $\proj_{[a,b]}\partial_n\bar\varphi\equiv b$
depending on the sign of $\hat c_{j,1}$. So the control is flat near non-convex corners.
This, together with the projection formula \eqref{E3.1} and \eqref{E3.11} implies that the optimal control belongs to $W^{1-1/p,p}(\Gamma)$. Finally, the regularity of the optimal state $\bar y$ follows from Lemma \ref{L2.3}.
\qquad\end{proof}

\begin{remark}\label{R3.5}
We would like to remark that the case of having $\hat c_{j,1}=0$ can be seen as a ``rare'' case in practice (although this can happen; see Example \ref{Ex3.8} below). So the ``normal'' case is that $\mathbb{H}^m_s=\emptyset$ for $m=2,3$. In this case, in the choice of $p$ made in \eqref{E3.6} the indexes $m=2,3$ are excluded, and hence $p$ will only depend $\max\{\omega_j:\ \omega_j<\pi\}$, so we get the same regularity for the control as that obtained in \cite{Casas-Raymond2006} for convex domains.
\end{remark}

To describe more accurately the regularity of the state and the control, we  have to introduce some further notation. Consider the coefficients $\hat c_{j,m}$ obtained in Theorem \ref{T3.2} and define the coefficients
\begin{equation}\label{E3.13}a_{j,m} =
\left\{\begin{array}{cl}
\displaystyle\frac{- m\lambda_j \hat c_{j,m}}{\nu}& \mbox{ if } 0\in[a,b]\\ \\
0 & \mbox{ if }0\not\in[a,b]
\end{array}
\right.
\end{equation}
and the functions
\begin{equation}\label{E3.14}s_{j,m}(\theta_j)=\frac{(-1)^{m+1}-\cos((m\lambda_j-1)\omega_j)}{\sin((m\lambda_j-1)\omega_j)}\sin\left( (m\lambda_j-1)\theta_j\right) + \cos \left( (m\lambda_j-1)\theta_j\right) .\end{equation}
The sets $\mathbb{H}^m_s$ will be used now to describe the singular part of the state and the control: if $j\in \mathbb{H}^m_s$, then
 $\xi_j r_j^{m\lambda_j-1}\not\in W^{1,s}(\Omega)$ and $\xi_jr_j^{m\lambda_j-1}\not\in W^{1-1/s,s}(\Gamma)$. Regarding \eqref{E3.16}, we also mention that if $\lambda_j \leq 1-2/s$, then $\xi_j r_j^{\lambda_j}\not\in W^{1,s}(\Omega)$.
\begin{theorem}\label{T3.6}
Assume further that  $ab\neq 0$.
Then there exist a unique $\bar u_{reg}\in W^{1-1/s^*,s^*}(\Gamma)$, a unique  $\bar y_{reg}\in W^{1,s}(\Omega)$, for all $s\leq s^*$, $s<p_D$, and unique real numbers $(c_j)_{\lambda_j \leq 1-2/s}$ such that
\begin{equation}\label{E3.15}\bar u(x)=\bar u_{reg}
+\sum_{m=1,3} \sum_{j\in \mathbb{H}^m_{s^*}}a_{j,m}\xi_j r^{m\lambda_j-1} +
\sum_{j\in \mathbb{H}^2_{s^*}}\chi_j a_{j,2}\xi_j r^{2\lambda_j-1}
\end{equation}

\begin{equation}\label{E3.16}\bar y = \bar y_{reg}
+\sum_{m=1}^3 \sum_{j\in \mathbb{H}^m_s}a_{j,m} \xi_j r_j^{m\lambda_j-1} s_{j,m}(\theta_j) +\sum_{\lambda_j \leq 1-2/s} c_j\xi_j r_j^{\lambda_j}\sin(\lambda_j \theta_j)\end{equation}
where the $\chi_j$ are the jump functions at the corners defined in \eqref{E2.10}, the $s_{j,m}(\theta)$ are defined in \eqref{E3.14} and the $\xi_j$ are the cut-off functions.
\end{theorem}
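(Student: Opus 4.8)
The plan is to read off the control decomposition \eqref{E3.15} by substituting the explicit form \eqref{E3.5} of the adjoint state into the projection formula \eqref{E3.1}, and then to obtain the state decomposition \eqref{E3.16} by solving the state equation \eqref{E3.2} with the boundary datum \eqref{E3.15} through Lemma \ref{L2.5}. The extra hypothesis $ab\neq 0$, i.e.\ $0$ is not an endpoint of $[a,b]$, is precisely what makes the projection act cleanly in a full neighbourhood of each corner, so I would organise the control part around the dichotomy $0\in(a,b)$ versus $0\notin[a,b]$.

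For the control I would start from the boundary traces of the normal derivative of the singular functions already computed in the proof of Theorem \ref{T3.4}: on $\Gamma_j$ one has, by \eqref{E3.8}, $\partial_n\xi_j r_j^{m\lambda_j}\sin(m\lambda_j\theta_j)=-m\lambda_j r_j^{m\lambda_j-1}$, while on $\Gamma_{j-1}$ one has, by \eqref{E3.9}, the value $(-1)^m m\lambda_j r_j^{m\lambda_j-1}$. For $m$ odd the two sides agree and produce a symmetric term, whereas for $m=2$ they differ by a sign, which is exactly encoded by the jump function $\chi_j$; this accounts for the splitting of the singular sum in \eqref{E3.15} into the symmetric $m=1,3$ part and the $\chi_j$-weighted $m=2$ part, with coefficients $a_{j,m}=-m\lambda_j\hat c_{j,m}/\nu$ as in \eqref{E3.13}. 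If $0\notin[a,b]$, then at every corner the argument $\nu^{-1}\partial_n\bar\varphi$ tends either to $0$ (at the convex and $\cup_m\mathbb{H}^m_{s^*}$ corners, where $m\lambda_j>1$) or to $\pm\infty$ (at the remaining non-convex corners, by the limit analysis in Theorem \ref{T3.4}), so $\proj_{[a,b]}$ is locally constant, no singular term survives, and $a_{j,m}=0$. If $0\in(a,b)$, then near the convex and $\mathbb{H}^m_{s^*}$ corners the argument tends into the open interval, the projection acts as the identity, and the singular expansion of $\nu^{-1}\partial_n\bar\varphi$ is transmitted verbatim to $\bar u$; near the non-convex corners with $\hat c_{j,1}\neq 0$ it is again locally constant. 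Subtracting the explicit singular terms and using $\partial_n\bar\varphi_{reg}\in W^{1-1/s^*,s^*}(\Gamma)$ together with the Lipschitz bound for $\proj_{[a,b]}$ on each side then gives $\bar u_{reg}\in W^{1-1/s^*,s^*}(\Gamma)$.

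For the state I would treat \eqref{E3.2} as a Dirichlet problem with datum \eqref{E3.15} and use linearity. The singular part of $\bar u$ is exactly of the form handled by Lemma \ref{L2.5}: put the symmetric $m=1,3$ terms into the set $\mathbb{H}^1$ of that lemma with exponents $\eta_{j,1}=m\lambda_j-1$, and the $m=2$ terms into $\mathbb{H}^2$ with $\eta_{j,2}=2\lambda_j-1$. All these exponents satisfy $m\lambda_j-1>0$, so the final assertion of Lemma \ref{L2.5} yields an $H^1(\Omega)$ solution, and the admissible range $p<2/(1-\eta_{j,n})=2/(2-m\lambda_j)$ there coincides with condition \eqref{E3.6}. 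The harmonic profiles \eqref{E2.12} with $\eta_j=m\lambda_j-1$ match the functions $s_{j,m}$ of \eqref{E3.14} (the factor $(-1)^{n+1}$ corresponds to $(-1)^{m+1}$), reproducing the middle sum of \eqref{E3.16}. The residual distributed-data singularities $\xi_j r_j^{m'\lambda_j}\sin(m'\lambda_j\theta_j)$ furnished by Lemma \ref{L2.5} satisfy, for $m'\geq 2$, $m'\lambda_j>1>1-2/s$ (since $\lambda_j>1/2$), hence lie in $W^{1,s}(\Omega)$ and are absorbed into the regular part, leaving only the leading $m'=1$ terms with $\lambda_j\leq 1-2/s$. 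For the regular datum $\bar u_{reg}$ I would invoke Lemma \ref{L2.3} for a $W^{1,s}(\Omega)$ contribution and the standard corner expansion for the homogeneous Laplace problem to extract the same leading singularities $\xi_j r_j^{\lambda_j}\sin(\lambda_j\theta_j)$ at the non-convex corners with $\lambda_j\leq 1-2/s$; collecting these with the ones from Lemma \ref{L2.5} fixes the coefficients $c_j$, everything else going into $\bar y_{reg}\in W^{1,s}(\Omega)$.

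I expect the main obstacle to be the behaviour of the control at the corners, specifically proving that, thanks to $ab\neq 0$, the operator $\proj_{[a,b]}$ is locally either the identity or a constant in a whole neighbourhood of each corner, so that the singular expansion of $\nu^{-1}\partial_n\bar\varphi$ passes through it without distortion. This rests on the limit analysis of Theorem \ref{T3.4} (the sign of $\hat c_{j,1}$ forcing a $\pm\infty$ blow-up, and $m\lambda_j>1$ forcing a vanishing limit at the $\mathbb{H}^m_{s^*}$ corners) and degenerates exactly in the excluded borderline $a=0$ or $b=0$. A genuinely delicate secondary point is the case $s^*=2$: there $\partial_n\bar\varphi_{reg}$ lies only in $H^{1/2}(\Gamma)$ and need not be continuous at convex corners, so the identity/constant dichotomy must be argued in the $H^{1/2}$ sense rather than pointwise, and one must verify that subtracting the explicit singular terms leaves a true $W^{1-1/s^*,s^*}(\Gamma)$ remainder across the transition between active and inactive parts of the constraint, for which the boundedness of the truncation operator on the fractional space $W^{1-1/s^*,s^*}(\Gamma_i)$ is required.
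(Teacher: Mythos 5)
Your proposal is correct and follows essentially the same route as the paper: the control decomposition is read off from the projection formula \eqref{E3.1} using the limits of \eqref{E3.8}--\eqref{E3.9} and the dichotomy $0\in(a,b)$ versus $0\notin[a,b]$, and the state decomposition is obtained by splitting $\bar y$ into the solution with regular datum (Lemma \ref{L2.3}) plus the solution with singular datum (Lemma \ref{L2.5}) and absorbing the harmless residual singularities into the regular part. The only cosmetic difference is that you also propose extracting corner singularities from the regular-datum part, which the paper avoids since Lemma \ref{L2.3} already places that contribution in $W^{1,s}(\Omega)$ for $s<p_D$.
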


\begin{proof}
From the considerations in the proof of Theorem \ref{T3.4} we have that far from the corners with index $j\in \cup_{m=1}^3 \mathbb{H}^m_{s^*}$, the optimal control is the projection of a function that is either regular enough or tends to a signed $\infty$ at one point, so it is clear that $\bar u_{reg}\in W^{1-1/s^*,s^*}(\Gamma)$.

Next we will check what happens in the neighborhoods of the corners with index
$j\in \cup_{m=1}^3 \mathbb{H}^m_{s^*}$.

If $0\not\in[a,b]$ then the control would also be flat in neighborhoods of the  corners with index
$j\in \cup_{m=1}^3 \mathbb{H}^m_{s^*}$ again because the normal derivative of the adjoint state is continuous near the corner and $0$ at the corner. Then \eqref{E3.15} holds with $a_{j,m}=0$.

If $a<0<b$, then the optimal control will coincide with $\partial_n\bar\varphi/\nu$. Using formulas \eqref{E3.8} and \eqref{E3.9} and taking into account the definition of the jump functions on the corners $\chi_j$ \eqref{E2.10}, we have that there exists $\rho_j>0$ such that for all $x\in\Gamma$ such that $|x-x_j|<\rho_j$
\begin{equation}\label{E3.17}\bar u(x)=\frac{1}{\nu}\partial_n\bar\varphi(x) =
\left\{\begin{array}{cl}
\displaystyle{\frac{1}{\nu}\partial_n\bar\varphi_{reg}(x) - \frac{\hat c_{j,1}\lambda_j}{\nu}r_j^{\lambda_j-1}}
& \mbox{ if } j\in \mathbb{H}^1_{s^*},\\ \\
\begin{array}{rl}
\displaystyle{ \frac{1}{\nu}\partial_n\bar\varphi_{reg}(x)} &\displaystyle{ - \chi_j\frac{\hat c_{j,2}2\lambda_j}{\nu}r_j^{2\lambda_j-1}}\\&
\displaystyle{ - \frac{\hat c_{j,3}3\lambda_j}{\nu}r_j^{3\lambda_j-1}}
\end{array}
& \mbox{ if }  j\in \mathbb{H}^2_{s^*}\cup\mathbb{H}^3_{s^*},
\end{array}\right.
\end{equation}
and \eqref{E3.15} holds for $a_{j,m} = -m\hat c_{j,m}\lambda_j/\nu$.

Let us finally check \eqref{E3.16}.
We will write $\bar y = y_1+y_2$, where \[-\Delta y_1=0\mbox{ in }\Omega,\ y_1=\bar u_{reg}\mbox{ on }\Gamma\]
and
\[-\Delta y_2=0\mbox{ in }\Omega,\ y_2=\sum_{m=1,3} \sum_{j\in \mathbb{H}^m_{s^*}}a_{j,m}\xi_j r^{m\lambda_j-1}
+\sum_{j\in \mathbb{H}^2_{s^*}}\chi_j a_{j,2}\xi_j r^{2\lambda_j-1}
\mbox{ on }\Gamma.\]
Using Lemma \ref{L2.3} we have that
$y_1\in W^{1,s}(\Omega)$ for $s\leq s^*$, $s<p_D$.
From Lemma \ref{L2.5} we have that
 there exist a unique $y_{2,reg}\in W^{2,p}(\Omega)$, $p$ defined in \eqref{E3.6}, and unique real numbers $c_{j,m}$ such that
\begin{eqnarray*}
y_2 &= &y_{2,reg}+\sum_{m=1}^3\sum_{j\in \mathbb{H}^m_{s^*}}a_{j,m}\xi_j r_j^{m\lambda_j-1}s_{j,m}(\theta_j) \\
&&+\sum_{m=1}^3\sum_{j\in \mathbb{J}^m_{p}}c_{j,m}\xi_j r_j^{m\lambda_j}\sin(m\theta_j)
\end{eqnarray*}
Since $p\geq 2$ and $s\leq s^* <\infty$, in dimension 2 we have thanks to usual Sobolev's imbedding $W^{2,p}(\Omega)\hookrightarrow W^{1,s}(\Omega)$ that $y_{2,reg}\in W^{1,s}(\Omega)$.

As we already mentioned, among the terms of the second addend, those which correspond to $\mathbb{H}^m_s$ are not in $W^{1,s}(\Omega)$. Notice that if $s^*<p_D$, and hence $s<p_D$, then there could be some terms in $W^{1,s}(\Omega)$, which we gather in a function $y_{a,reg}\in W^{1,s}(\Omega)$.

For the last addend, we have that $\xi_j r_j^{m\lambda_j}\sin(m\theta_j)\not\in W^{1,s}(\Omega)$ iff $m\lambda_j\leq 1-2/s$. Since $s\geq 2$, this excludes the case $m>1$. We gather all the other terms in a function $y_{b,reg}\in W^{1,s}(\Omega)$.

So finally we have that the \eqref{E3.16}  holds for $\bar y_{reg}=y_1+y_{2,reg}+y_{a,reg}+y_{b,reg}\in W^{1,s}(\Omega)$ and $c_j=c_{j,1}$.
\qquad\end{proof}

%

Let us present now the example announced in Remark \ref{R3.5}.
For the example, we want to remark that our results are also applicable in curvilinear polygons without many changes (see \cite[Th. 5.2.7]{Grisvard85}). The only thing to take into account is that if the angle $\omega_j$ between two curved arcs is grater than $\pi$, then we must impose also $s^*<\omega_j/(\omega_j-\pi)$ (this is not the case in the following example).
\begin{example}\label{Ex3.8}Let $\omega_1=3\pi/2$ and consider the curvilinear polygon $\Omega=\{x\in\mathbb{R}^2:\ 0<r<1,\ 0<\theta<\omega_1\}$, where $(r,\theta)$ are the usual polar coordinates. We have that  $\omega_2=\omega_3=\pi/2$, and hence $\lambda_1=2/3$, $\lambda_2=\lambda_3=2$. Suppose $y_\Omega\in L^\infty(\Omega)$, so we may choose any $s^*<+\infty$. We have $\mathbb{J}^1_{s^*}=\mathbb{J}^2_{s^*}=\{1\}$ and $\mathbb{J}^3_{s^*}=\emptyset$. We also have $p<3$. The adjoint state can be written as
\[\bar\varphi = \bar\varphi_{reg} +\hat c_{1,1}\xi_1 r^{2/3}\sin\left(\frac23\theta\right) + \chi_1\hat c_{1,2}\xi_1 r^{4/3}\sin\left(\frac43\theta\right)\]
where $\bar\varphi_{reg}\in W^{2,s^*}(\Omega)$ for all $s^*<+\infty$. Take $-a=b=1$ for instance.
If $\hat c_{1,1}\neq 0$, then $\mathbb{H}^m_{s^*}=\emptyset$ for $m=1,2,3$ and hence $\bar u = \bar u_{reg}\in W^{1-1/s^*,s^*}(\Gamma)$.

Define now
\begin{equation}y_\Omega(x)=\left\{\begin{array}{rl}
1& \mbox{ if }\theta <\omega_1/2\\
-1&\mbox{ if }\theta >\omega_1/2
\end{array}\right.\label{E3.18}\end{equation}
such that the problem is skew-symmetric with respect to the line with $\theta=\omega_1/2$. 
The skew-symmetry of the data suggests that the solution is skew-symmetric, i.~e. that the symmetric contribution with $r^{2/3}\sin\left(\frac23\theta\right)$ vanishes, $\hat c_{1,1}=0$ (result which we have confirmed numerically),
 and hence
\[\bar u = \bar u_{reg} + \chi_1 a_{1,2}\xi_1 r^{1/3}\]
so $\bar u\in W^{1-1/p,p}(\Gamma)$ for all $p<3$.
\end{example}

In Theorem \ref{T3.6} we have excluded the cases $a=0$ or $b=0$. These cases can be treated with the same techniques. Nevertheless, many cases may appear depending on which of the bounds is zero and the sign of the coefficients of the singular part $\hat c_{j,m}$. As an example, we will show how to treat some of these cases.
 We will discuss first what we think is  the ``generic'' case, and then a  seemingly more ``rare'' case. Without loss of generality suppose $a=0$, $b>0$.

\paragraph{Case 1} Take $j\in \mathbb{H}^1_{s*}$ and suppose $\hat c_{j,1}\neq 0$. Then in the expression for the normal derivative of the adjoint state \eqref{E3.17}, the term $\xi_j r_j^{\lambda_j-1}$ dominates the term $\partial_n\varphi_{reg}$, since $\xi_j r^{\lambda_j-1}\not\in W^{1-1/s^*,s^*}(\Gamma)$ and $\partial_n\bar\varphi_{reg}\in W^{1-1/s^*,s^*}(\Gamma)$. Therefore, if $\hat c_{j,1}<0$, we would have that $\partial_n\bar\varphi(x) \in[0,b]$ in a neighborhood of $x_j$, and hence $\bar u(x)$ can be computed as in \eqref{E3.17}. On the other hand, if $\hat c_{j,1}>0$, then $\partial_n\bar\varphi(x) \leq 0$ in a neighborhood of $x_j$, so $\bar u(x)\equiv 0$ in that neighborhood. This would be the case of taking Example \ref{Ex3.8} with the following data: $\omega_1=3\pi/4$, $a=0$, $b=1$ and either $y_\Omega\equiv 1$ or $y_\Omega\equiv -1$, which would give $\sign(c_{j,1})=-\sign(y_\Omega)$.

\paragraph{Case 2}  If $j\in \mathbb{H}^2_{s^*}$ and $\hat c_{j,2} <0$, then $\partial_n\bar\varphi(x) \in[0,b]$ in a neighborhood of $x_j$ on the side $\Gamma_{j}$, but $\partial_n\bar\varphi(x) \leq 0$ in a neighborhood of $x_j$ on the side $\Gamma_{j-1}$, so on $\Gamma_{j}$, $\bar u(x)$ would have the same expression as in \eqref{E3.17}, but, on $\Gamma_{j-1}$, $\bar u(x)$ would be flat near the corner $x_j$. This would be the case of taking Example \ref{Ex3.8} for $a=0$, $b=1$ and $y_\Omega$ defined in \eqref{E3.18}.


\section{More regular data}\label{sec:4}\setcounter{equation}{0}
Taking advantage of the regularity of the optimal state, we can obtain several results for more regular data. We will write some results that we think will be useful for the numerical analysis of problem (P). To be specific, to obtain error estimates for problems with regular data, we will need that for $y_\Omega\in H^1(\Omega)$, $\bar u\in H^{3/2-\varepsilon}(\Gamma)$ (cf. Corollary \ref{C4.2}) and  $W^{3,p}(\Omega)$ regularity of the regular part of the adjoint state if $y_\Omega\in W^{1,p}(\Omega)$, $p\geq 2$ (see Corollary \ref{C4.3}). Some other results can be obtained used the techniques exposed in Section \ref{sec:3}.

Suppose now that $y_\Omega\in H^{t^*}(\Omega)$, with $0<t^*\leq 1$ such that
\[(1+t^*)/\lambda_j\not\in\mathbb{Z}\ \forall j\in \{1,\ldots,M\}.\]
For $t> -1$ and $m\in \mathbb{Z}$ define
\[\tilde{\mathbb{J}}^m_t =\left\{j\in \{1,\ldots,M\}\mbox{ such that }0<m\lambda_j<1+t\right\}\]
Notice again that due to our choice of $t^*$, we only will deal with the cases $m=1,2,3$.
\begin{corollary}There exist  a unique function $\bar\varphi_{reg}\in H^{2+t^*}(\Omega)$ and unique real numbers $\hat c_{j,m}$ such that
\[\bar\varphi = \bar\varphi_{reg} + \sum_{m=1}^3\sum_{j\in \tilde{\mathbb{J}}^m_{t^*} }\hat c_{j,m}  \xi_j r_j^{m\lambda_j}\sin(m\lambda_j\theta_j).\]
\label{C4.1}
\end{corollary}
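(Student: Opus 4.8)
The plan is to mimic the proof of Theorem~\ref{T3.2}, replacing the integer-order decomposition of Lemma~\ref{L2.1} by its fractional-order counterpart. The adjoint state solves \eqref{E3.3}, i.e.\ $-\Delta\bar\varphi=\bar y-y_\Omega$ in $\Omega$ with $\bar\varphi=0$ on $\Gamma$, so the argument splits into (i) identifying the regularity of the right-hand side $\bar y-y_\Omega$ and (ii) invoking a decomposition theorem for the homogeneous Dirichlet problem with data in $H^{t^*}(\Omega)$.

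For step (i), I would first note that, since $t^*>0$, the embedding $H^{t^*}(\Omega)\hookrightarrow L^{s^*}(\Omega)$ holds for some $s^*>2$ (Sobolev embedding in two dimensions), and $s^*$ may be chosen to satisfy \eqref{E3.4}; thus the standing hypotheses of Section~\ref{sec:3} are met. Next I would apply Theorem~\ref{T3.4}: the choice $p=2$ is admissible in \eqref{E3.6} because every relevant corner in $\cup_m\mathbb{H}^m_{s^*}$ satisfies $m\lambda_j>1$, and since $\lambda_1>1/2$ we have $p_D=2/(1-\min\{1,\lambda_1\})>4$. Hence $q=2<p_D$ is admissible and $\bar y\in W^{1,2}(\Omega)=H^1(\Omega)$. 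As $t^*\le1$, this gives $\bar y\in H^{t^*}(\Omega)$, and therefore $\bar y-y_\Omega\in H^{t^*}(\Omega)$.

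For step (ii), I would apply the fractional-order corner decomposition for the problem $-\Delta\bar\varphi=g$, $\bar\varphi=0$ on $\Gamma$, with $g\in H^{t^*}(\Omega)$ (cf.\ Dauge \cite[\S23.C]{Dauge1988} and Grisvard \cite[Th.~2.4.3 and \S2.7]{Grisvard1992}). A singular function $\xi_jr_j^{m\lambda_j}\sin(m\lambda_j\theta_j)$ fails to lie in $H^{2+t^*}(\Omega)$ exactly when $m\lambda_j<1+t^*$, which is precisely the membership condition defining $\tilde{\mathbb{J}}^m_{t^*}$. The decomposition theorem then yields $\bar\varphi_{reg}\in H^{2+t^*}(\Omega)$ together with unique coefficients $\hat c_{j,m}$ for $j\in\tilde{\mathbb{J}}^m_{t^*}$, which is the claimed expansion. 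The non-resonance hypothesis $(1+t^*)/\lambda_j\notin\mathbb{Z}$ rules out the critical case $m\lambda_j=1+t^*$, where logarithmic terms would appear and the clean split into $H^{2+t^*}$ plus pure power singularities would fail; and, as noted in the excerpt, $t^*\le1$ together with $\lambda_j>1/2$ restricts the admissible exponents to $m\in\{1,2,3\}$.

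The main obstacle is step (ii): one must invoke the correct fractional Dirichlet decomposition with the singular basis matching that of Lemma~\ref{L2.1}, verifying that the regular part lands in $H^{2+t^*}(\Omega)$ (and not merely in some $W^{2,p}(\Omega)$) and that the extracted coefficients $\hat c_{j,m}$ agree with those of Theorem~\ref{T3.2} on the common indices; the latter follows from the uniqueness of the local singular expansion of $\bar\varphi$ at each corner. By contrast, step (i) is routine once one observes that the control constraints force $\bar y\in H^1(\Omega)$ even in the non-convex case.
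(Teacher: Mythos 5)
Your proposal is correct and follows essentially the same route as the paper: obtain $\bar y\in H^1(\Omega)\hookrightarrow H^{t^*}(\Omega)$ via Theorem \ref{T3.4} (the paper uses the $q>2$ from that theorem and then embeds $W^{1,q}(\Omega)\hookrightarrow H^1(\Omega)$, while you take $q=2$ directly, which is equally admissible), and then apply the fractional-order corner decomposition of Dauge/Grisvard to the adjoint equation, with the non-resonance condition excluding logarithmic terms. Your additional remarks on why $\tilde{\mathbb{J}}^m_{t^*}$ is the correct index set and on the consistency of the coefficients $\hat c_{j,m}$ are accurate elaborations of what the paper leaves implicit.
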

\begin{proof}Since $t^*>0$, there exists $s^*>2$ satisfying \eqref{E3.4} such that $y_\Omega\in L^{s^*}(\Omega)$, and hence we can apply Theorem \ref{T3.4} and we have that also $\bar y\in W^{1,q}(\Omega)\hookrightarrow H^1(\Omega)\hookrightarrow H^{t^*}(\Omega)$, where $q>2$ is defined in Theorem~\ref{T3.4}. The result follows directly from the adjoint state equation \eqref{E3.3} thanks to the regularity results in \cite[\S23.C]{Dauge1988}; see also \cite[Th. 2.4.3 and \S2.7]{Grisvard1992}. Notice that due to the conditions imposed on $t^*$, logarithmic terms do not appear in the development of the singular part.
\qquad\end{proof}

To describe the regularity of the optimal control and state, we first introduce the sets $\tilde{\mathbb{H}}^m_t$ in an analogous way as we did for the sets $\mathbb{H}^m_s$, the indexes being taken now in the sets $\tilde{\mathbb{J}}^m_t$ defined above instead of the sets $\mathbb{J}^m_s$ and considering the coefficients $\hat c_{j,1}$ obtained in Corollary \ref{C4.1}.
We next define the exponents
$t>0$, related to the regularity of the control, and $\tilde t>0$, related to the regularity of the state such that
\begin{eqnarray*}
&t\leq t^*,\ t<1,\ t< m\lambda_j-1\mbox{ if }j\in \tilde{\mathbb{H}}^m_t,\ m=1,2,3\\
& \tilde t\leq t,\ \tilde t <\lambda_1.
\end{eqnarray*}
The meaning of these bounds is the following. The regularity of the optimal control will be limited by the regularity of the data, the impossibility of having a control globally in $H^{3/2}(\Gamma)$ due to the corners and the bound constraints, and the singular behavior of the control at the convex corners or the ``special'' nonconvex corners that may lay in $\tilde{\mathbb{H}}^m_t$ for $m=2,3$. The regularity of the optimal state will be limited by the regularity of the control and the singular behavior at the nonconvex corners of the solution.
\begin{corollary}\label{C4.2}Suppose
that
\begin{equation}\label{E4.1}\sharp\partial_\Gamma\{x\in\Gamma:\bar u(x)=a\mbox{ or }\bar u(x)=b\} < +\infty
\end{equation}
(the number of points on the boundary in the topology of $\Gamma$ of the active set is finite). Then
the optimal control $\bar u$ belongs to $H^{1/2+t}(\Gamma)$ and $\bar y\in H^{1+\tilde t}(\Omega)$.
\end{corollary}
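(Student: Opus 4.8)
The plan is to combine the decomposition of the adjoint state from Corollary~\ref{C4.1} with the projection formula \eqref{E3.1}, exactly as in the proofs of Theorems~\ref{T3.4} and~\ref{T3.6}, but now measuring everything in the Hilbertian scale $H^s$ and using the hypothesis \eqref{E4.1} to cross the threshold $s=1$. First I would analyze $g:=\nu^{-1}\partial_n\bar\varphi$ on $\Gamma$. Writing $\bar\varphi$ as in Corollary~\ref{C4.1}, the regular part contributes $\nu^{-1}\partial_n\bar\varphi_{reg}$, which belongs to $H^{1/2+t^*}$ on each side $\Gamma_j$ by the trace theorem (recall $\bar\varphi_{reg}\in H^{2+t^*}(\Omega)$ vanishes on $\Gamma$), and by the argument of Theorem~\ref{T3.4} is continuous and vanishes at the corners since $t^*>0$ forces the admissible $s^*>2$. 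Via \eqref{E3.8}--\eqref{E3.9} the singular terms contribute functions behaving like $r_j^{m\lambda_j-1}$ near each corner, and the elementary one-dimensional fact that $r^\beta\in H^\sigma$ near $r=0$ iff $\beta>\sigma-1/2$ shows that $\xi_j r_j^{m\lambda_j-1}\in H^{1/2+t}(\Gamma)$ precisely when $m\lambda_j-1>t$, which is the defining condition of the sets $\tilde{\mathbb{H}}^m_t$ and of the chosen bound on $t$. Hence $g\in H^{1/2+t}(\Gamma)$ near every corner in $\cup_m\tilde{\mathbb{H}}^m_t$ and, by the regularity of $\bar\varphi_{reg}$, on every compact subset of $\Gamma$ avoiding the remaining corners.

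At the remaining corners, namely those with $\lambda_j<1$ and $\hat c_{j,1}\neq0$, the computation \eqref{E3.12} shows that $g$ tends to a signed infinity, so by \eqref{E3.1} the control is flat (identically $a$ or $b$) in a full neighborhood; these neighborhoods are harmless and their endpoints are among the finitely many points of \eqref{E4.1}. The core step is then to prove $\bar u=\proj_{[a,b]}(g)\in H^{1/2+t}(\Gamma)$. Away from the flat corner neighborhoods we have $g\in H^{1/2+t}(\Gamma)$ with $1/2<1/2+t<3/2$. When $1/2+t\le1$ this is immediate, since $\proj_{[a,b]}$ is Lipschitz and composition with a Lipschitz map is bounded on $H^s$ for $0<s\le1$.

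The delicate case is $1<1/2+t<3/2$, i.e.\ $t>1/2$. Here I would use that, because $\bar u$ is continuous and equals the constant $a$ or $b$ on the active set and equals $g$ on the inactive set, its distributional derivative carries no jump contributions and satisfies $\bar u'=g'\,\mathbf{1}_{\{a<g<b\}}$. By \eqref{E4.1} the inactive set is a finite union of arcs, so $\mathbf{1}_{\{a<g<b\}}$ is piecewise constant with finitely many jumps and is therefore a pointwise multiplier on $H^{t-1/2}(\Gamma)$, because $t-1/2<1/2$ and characteristic functions of intervals are multipliers on $H^\sigma$ for $|\sigma|<1/2$. Since $g\in H^{1/2+t}(\Gamma)$ gives $g'\in H^{t-1/2}(\Gamma)$, we obtain $\bar u'\in H^{t-1/2}(\Gamma)$, that is $\bar u\in H^{1/2+t}(\Gamma)$. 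For the state I would then use $\bar y=S\bar u$: since $\tilde t\le t$ we have $\bar u\in H^{1/2+\tilde t}(\Gamma)$, and because $\tilde t<\lambda_1$ none of the corner singularities $r_j^{\lambda_j}\sin(\lambda_j\theta_j)$ obstructs regularity below the level $1+\tilde t$; invoking Lemma~\ref{L2.5} (or directly the shift theorem of Dauge and Grisvard cited after \eqref{E2.1}) splits $\bar y$ into a part in $H^{1+\tilde t}(\Omega)$ and corner singularities, each of which lies in $H^{1+\tilde t}(\Omega)$ thanks to $\tilde t<\lambda_1\le\lambda_j$, whence $\bar y\in H^{1+\tilde t}(\Omega)$.

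I expect the main obstacle to be precisely the range $1<1/2+t<3/2$: crossing the threshold $s=1$ for the projection is where a plain Lipschitz-composition argument fails, the model $x_+$ showing that $H^{3/2}$ is the natural barrier under pointwise constraints. It is exactly here that the finite-active-set hypothesis \eqref{E4.1} is indispensable, as it turns $\mathbf{1}_{\{a<g<b\}}$ into an admissible multiplier on the low-order space $H^{t-1/2}$; verifying this multiplier property and the absence of jump terms in $\bar u'$ is the technical heart of the argument, while the corner analysis feeding $g\in H^{1/2+t}$ runs parallel to Theorems~\ref{T3.4} and~\ref{T3.6}.
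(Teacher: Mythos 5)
Your proposal is correct and follows essentially the same route as the paper: decompose $\partial_n\bar\varphi$ via Corollary~\ref{C4.1}, check that the singular terms $r_j^{m\lambda_j-1}$ lie in $H^{1/2+t}(\Gamma)$ under the stated bounds on $t$, use flatness of $\bar u$ near non-convex corners with $\hat c_{j,1}\neq0$, and lift $\bar u$ to apply the shift theorem for $\bar y$. The only difference is that you make explicit (via $\bar u'=g'\,\mathbf{1}_{\{a<g<b\}}$ and the multiplier property of indicators of finitely many arcs on $H^{t-1/2}$, $t-1/2<1/2$) the step the paper merely asserts, namely that the projection does not destroy $H^{1/2+t}$ regularity when $t>1/2$ because of hypothesis \eqref{E4.1}; this is a welcome clarification, not a different proof.
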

\begin{proof}The proof follows the lines of that of Theorem \ref{T3.4}. Since $\bar\varphi_{reg}\in H^{2+t^*}(\Omega)$ and $\bar\varphi_{reg}=0$ on $\Gamma$ its normal derivative will be in $H^{1/2+t}(\Gamma)$ provided $t\leq t^*$ and $t<1$ (the condition $\bar\varphi_{reg}=0$ on $\Gamma$ is needed for $t^*\geq 1/2$ to prove that the normal derivative tends to zero at the corners, and hence it is continuous; notice that for $t^*=1$ this continuity is not enough to have that the normal derivative is in $H^{3/2}(\Gamma)$.)

This $H^{1/2+t}(\Gamma)$ regularity is not affected by the projection formula \eqref{E3.1} because $t<1$ and we are supposing \eqref{E4.1}. The same happens with the singular terms such that $\hat c_{j,1}\neq 0$ and $\lambda_j<1$. The rest of the singular terms in the expression for the normal derivative of the adjoint state will be in $H^{1/2+t}(\Gamma)$ since $t<m\lambda_j-1$ (see equations \eqref{E3.8} and \eqref{E3.9} for the expression of the normal derivatives of the singular part.)

Let us prove that $\bar y\in H^{1+\tilde t}(\Omega)$. Since $\bar u\in H^{1/2+t}(\Gamma)$, there exists some $U\in H^{1+t}(\Omega)$ such that $U = u$ on $\Gamma$. Moreover, $\Delta U \in H^{-1+t}(\Omega)$. So $z = \bar y-U$ is the solution of the boundary value problem
\[-\Delta z = \Delta U\mbox{ in }\Omega,\ z= 0\mbox{ on }\Gamma.\]
Using  the regularity results in \cite[\S23.C]{Dauge1988} (see also \cite[Th. 2.4.3 and \S2.7]{Grisvard1992}) we have that there exist a unique $z_{reg}\in H^{1+t}(\Omega)$ and  unique coefficients $c_{j,m}$ such that
\[z = z_{reg} + \sum_{m = 1}^3\sum_{j\in\tilde{\mathbb{J}}^m_{-1+t}} c_{j,m}\xi_j r^{m\lambda_j}\sin(m\lambda_j\theta_j).\]
Since $\tilde t <\lambda_1$, the singular part is in $H^{1+\tilde t}(\Omega)$, and so is the optimal state.
\qquad\end{proof}

Finally, we will describe the adjoint state for even more regular data. In the rest of this section we will suppose $y_\Omega\in W^{1,p^*}(\Omega)$, $p^*\geq 2$.

For $p > 1$ and $m\in \mathbb{Z}$ define
\[\mathbb{J}^m_{1,p} =\left\{j\in \{1,\ldots,M\}\mbox{ such that }0<m\lambda_j<3-\frac{2}{p}\mbox{ and } m\lambda_j\not\in\mathbb{Z}\right\}.\]
Now we have that   $\mathbb{J}^m_{1,p}=\emptyset$ if $m>5$. We have to add the condition $m\lambda_j\not\in\mathbb{Z}$ otherwise logarithmic terms may appear. Define also:
\[\mathbb{L}^m_{1,p} =\left\{j\in \{1,\ldots,M\}\mbox{ such that }0<m\lambda_j<3-\frac{2}{p}\mbox{ and } m\lambda_j\in\mathbb{Z}\right\}.\]
A direct calculation gives us that $\mathbb{L}^m_{1,p}=\emptyset$ if $m=2$ or $m\geq 4$, $\mathbb{L}^1_{1,p}\subset\{j:\ \omega_j = \pi/2\}$ and $\mathbb{L}^3_{1,p}\subset\{j:\ \omega_j = 3\pi/2\}$, and hence $m\lambda_j=2$ if $j\in \mathbb{L}^m_{1,p}$.

Consider now $p\geq 2$ such that, for $m=1,2,3$
\begin{equation}\label{E4.2}
p\leq p^*,\ p<p_D,\ p<\frac{2}{2-m\lambda_j}\mbox{ if }j\in \mathbb{H}^m_{s^*}\ \forall {s^*}<\infty.
\end{equation}
In addition, we need to to assume
\[\frac{3p-2}{\lambda_j p}\not\in \mathbb{Z}\ \forall j\in \{1,\ldots,M\}.\]
With this notation, we have the following result.
\begin{corollary}
There exist  a unique function $\bar\varphi_{reg}\in W^{3,p}(\Omega)$ and unique real numbers $\hat c_{j,m}$ and $\hat d_{j,m}$ such that
\begin{eqnarray*}
\bar\varphi & = &\bar\varphi_{reg} + \sum_{m=1}^5\sum_{j\in \mathbb{J}^m_{1,p} }\hat c_{j,m} \xi_j r_j^{m\lambda_j}\sin(m\lambda_j\theta_j) \\
&&+ \sum_{m=1,3}\sum_{j\in \mathbb{L}^m_{1,p} }\hat d_{j,m}\xi_j  r_j^{2} \left(\log(r_j)\sin(2\theta_j) + \theta_j\cos(2\theta_j) \right).
\end{eqnarray*}
\label{C4.3}
\end{corollary}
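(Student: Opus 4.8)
The plan is to reduce the statement to the corner regularity theory for the Dirichlet Laplacian with a right-hand side in $W^{1,p}(\Omega)$, applied to the adjoint equation \eqref{E3.3}. The only genuinely new ingredient beyond the references will be the appearance of the logarithmic singular functions at the resonant corners.

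First I would show that $\bar y-y_\Omega\in W^{1,p}(\Omega)$. Since $y_\Omega\in W^{1,p^*}(\Omega)$ with $2\le p^*$, in two dimensions $y_\Omega\in L^{s^*}(\Omega)$ for every finite $s^*$, so we may fix some $s^*\ge p$ satisfying \eqref{E3.4}. Comparing \eqref{E4.2} with \eqref{E3.6}, the hypotheses on $p$ imply in particular that $p$ satisfies \eqref{E3.6} (the condition over all finite $s^*$ in \eqref{E4.2} is stronger than the one for a fixed $s^*$) and that $p<p_D$. Hence Theorem \ref{T3.4} yields $\bar y\in W^{1,q}(\Omega)$ for all $q\le p$, $q<p_D$, and taking $q=p$ gives $\bar y\in W^{1,p}(\Omega)$. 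Since $p\le p^*$ we also have $y_\Omega\in W^{1,p}(\Omega)$, so the datum of \eqref{E3.3} belongs to $W^{1,p}(\Omega)$.

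Next I would invoke the corner regularity theory (Dauge \cite[\S23.C]{Dauge1988}; see also Grisvard \cite[Th. 2.4.3 and \S2.7]{Grisvard1992}) for the problem $-\Delta\bar\varphi=\bar y-y_\Omega$, $\bar\varphi=0$ on $\Gamma$ with right-hand side in $W^{1,p}(\Omega)$. Because $-\Delta$ gains two orders of regularity, the regular part lies in $W^{3,p}(\Omega)$, and the solution differs from it by a finite linear combination of corner singular functions. A scaling computation as in Lemma \ref{L3.3}, but carried out at the level of third derivatives, shows that $\xi_jr_j^{m\lambda_j}\sin(m\lambda_j\theta_j)\notin W^{3,p}(\Omega)$ exactly when $m\lambda_j<3-2/p$; this is why the singular terms are indexed by the exponents with $0<m\lambda_j<3-2/p$, and why only $m\le 5$ can occur (for $m\ge 6$ one has $m\lambda_j>m/2\ge 3$). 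The genericity assumption $(3p-2)/(\lambda_j p)\notin\mathbb{Z}$ ensures that no exponent equals the threshold $3-2/p$, so that the splitting is clean and $\bar\varphi_{reg}$ is genuinely in $W^{3,p}(\Omega)$; uniqueness of the coefficients then follows from the linear independence of the singular functions modulo $W^{3,p}(\Omega)$.

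The remaining point, which I expect to be the main obstacle, is the appearance of the logarithmic terms. When the admissible exponent $m\lambda_j$ is a positive integer, the standard singular function resonates with the regular part and must be replaced by a logarithmically modified one. In the present range this can only happen for $m\lambda_j=2$, that is at right angles with $m=1$ (the set $\mathbb{L}^1_{1,p}$) or at reentrant angles $\omega_j=3\pi/2$ with $m=3$ (the set $\mathbb{L}^3_{1,p}$), since $m=2$ and $m\ge 4$ give no integer exponent in $(0,3-2/p)$, as noted before the statement. For these resonant corners one has to verify, or cite precisely, that the correct singular function for the Dirichlet Laplacian associated to the exponent $2$ is $\xi_jr_j^2\bigl(\log(r_j)\sin(2\theta_j)+\theta_j\cos(2\theta_j)\bigr)$, which is the term carried by the coefficients $\hat d_{j,m}$. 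Collecting the non-integer exponents into the sums over $\mathbb{J}^m_{1,p}$ and the integer exponent into the sums over $\mathbb{L}^m_{1,p}$ then yields the stated decomposition.
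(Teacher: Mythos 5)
Your proposal is correct and follows essentially the same route as the paper: first obtain $\bar y\in W^{1,p}(\Omega)$ from Theorem \ref{T3.4} (using that $y_\Omega\in W^{1,p^*}(\Omega)\subset L^{s^*}(\Omega)$ for every finite $s^*$), then apply the standard corner singular expansion for the Dirichlet Laplacian with right-hand side in $W^{1,p}(\Omega)$ to the adjoint equation \eqref{E3.3}, with the logarithmic singular functions appearing precisely at the resonant exponent $m\lambda_j=2$. The paper cites \cite[Th.~5.1.3.5]{Grisvard85} for this expansion rather than Dauge/Grisvard~(1992), but that is an immaterial difference.
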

\begin{proof}
Since $p^*\geq 2$, $y_\Omega\in L^{s^*}(\Omega)$ for any $s^*<\infty$, and hence we can apply Theorem \ref{T3.4} and we have that also $\bar y\in W^{1,p}(\Omega)$. The result follows directly from the adjoint state equation \eqref{E3.3} thanks to the regularity result \cite[Th. 5.1.3.5]{Grisvard85}
\begin{eqnarray*}
\bar\varphi & = &\bar\varphi_{reg} + \sum_{m=1}^5\sum_{j\in \mathbb{J}^m_{1,p} }\hat \xi_j r_j^{m\lambda_j}\sin(m\lambda_j\theta_j) \\
&&+ \sum_{m=1,3}\sum_{j\in \mathbb{L}^m_{1,p} }\hat d_{j,m}\xi_j  r_j^{m\lambda_j} \left(\log(r_j)\sin(m\lambda_j\theta_j) + \theta_j\cos(m\lambda_j\theta_j) \right).
\end{eqnarray*}
and using that $m\lambda_j=2$ if $j\in \mathbb{L}^m_{1,p}$.
\qquad\end{proof}

\section{\label{sec:5}Problems without control constraints}
For problems without control constraints, we obtain similar results. Indeed, for convex domains and data $y_\Omega\in L^{s^*}(\Omega)$, $s^*>2$, it is obvious from Theorem \ref{T5.1} below that the optimal control is a bounded function and hence all the results stated before apply. Nevertheless, for nonconvex domains, we will not obtain a continuous control and the singularities must be taken into account near all the corners. Therefore, the indexes for the expansion of the singular parts must be taken running through all the sets $\mathbb{J}^m_s$, and not only through the sets $\mathbb{H}^m_s$.
\begin{theorem}\label{T5.1}Suppose now that $-a=b=\infty$ and $y_\Omega\in L^{s^*}(\Omega)$, $s^*\geq 2$. Then there exists a unique $\bar\varphi_{reg}\in W^{2,s}(\Omega)$, $\bar u_{reg}\in W^{1-1/s,s}(\Gamma)$, $\bar y_{reg}\in W^{1,s}(\Omega)$, for all $s\leq s^*$, $s<p_D$, and unique real numbers $(\hat c_{j,m})_{j\in \mathbb{J}^m_{s}}$  and $(c_j)_{\lambda_j<1-2/s}$ such that
\begin{equation}\label{E5.1}\bar\varphi = \bar\varphi_{reg} +\sum_{m=1}^3\sum_{j\in \mathbb{J}^m_{s}} \hat c_{j,m} r_j^{m\lambda_j}\sin(m\lambda_j\theta_j)\xi_j\end{equation}
\begin{equation}\label{E5.2}\bar u(x)=\bar u_{reg}+
\sum_{m=1,3}\sum_{j\in \mathbb{J}^m_s} a_{j,m} \xi_j r^{m\lambda_j-1} +
\sum_{j\in \mathbb{J}^2_s} \chi_j a_{j,m}\xi_j r^{2\lambda_j-1} \end{equation}
\begin{equation}\label{E5.3}\bar y = \bar y_{reg}+
\sum_{m=1}^3\sum_{j\in \mathbb{J}^m_{s}} a_{j,m}\xi_j r_j^{m\lambda_j-1} s_{j,m}(\theta_j) +\sum_{\lambda_j<1-2/s}c_j\xi_j r_j^{\lambda_j}\sin(\lambda_j\theta_j)\end{equation}
where $a_{j,m}$ and $s_{j,m}(\theta)$ are given by the formulas \eqref{E3.13} and \eqref{E3.14}.
\end{theorem}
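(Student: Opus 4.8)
The plan is to follow the architecture of the proof of Theorem~\ref{T3.6}, but with two structural changes forced by the unconstrained setting: since $-a=b=\infty$ the projection in \eqref{E3.1} is the identity, so the optimality relation reads $\bar u=\frac1\nu\partial_n\bar\varphi$ on $\Gamma$, and there is no flattening of the control at the nonconvex corners; consequently every corner contributes its full singular expansion and the indices run over the sets $\mathbb J^m_s$ rather than the restricted sets $\mathbb H^m_s$. First I would invoke Lemma~\ref{T3.1} (valid here since $U_{ad}=L^2(\Gamma)$ and $\nu>0$ keeps $J$ strictly convex and coercive) to obtain the unique $\bar u\in L^2(\Gamma)$, $\bar y\in H^{1/2}(\Omega)$, $\bar\varphi\in H^1_0(\Omega)$ together with the optimality system \eqref{E3.1}--\eqref{E3.3}, rewritten as $\bar u=\frac1\nu\partial_n\bar\varphi$.

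The hard part, and the only genuinely new ingredient compared with Theorem~\ref{T3.6}, is a bootstrap that replaces the role played there by the maximum principle (Theorem~\ref{T2.7}), which is unavailable without the $L^\infty$ bound coming from the constraints. Starting from $\bar y\in H^{1/2}(\Omega)\hookrightarrow L^2(\Omega)$ and $y_\Omega\in L^{s^*}(\Omega)\subset L^2(\Omega)$, I have $\bar y-y_\Omega\in L^2(\Omega)$, so Lemma~\ref{L2.1} applied with $p=2$ expands $\bar\varphi$ with singular exponents satisfying $m\lambda_j<1$; the most singular monomial is $r_1^{\lambda_1}$. Differentiating as in \eqref{E3.8}--\eqref{E3.9} gives a control that behaves near $x_j$ like $r_j^{m\lambda_j-1}$, the worst term being $r_1^{\lambda_1-1}$; since $\lambda_1>1/2$ this exponent exceeds $-1/2$, so Lemma~\ref{L2.5} is applicable and produces a state behaving like $r_j^{m\lambda_j-1}s_{j,m}(\theta_j)$. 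A change to polar coordinates shows $\xi_j r_j^{\lambda_1-1}s_{j,1}(\theta_j)\in L^s(\Omega)$ exactly when $s<2/(1-\lambda_1)=p_D$, and this leading term is the binding one since $m\lambda_j\ge\lambda_1$ for all admissible $m,j$. Hence a single pass yields $\bar y\in L^s(\Omega)$ for all $s<p_D$, and therefore $\bar y-y_\Omega\in L^s(\Omega)$ for every $s\le s^*$ with $s<p_D$; no iteration is needed because the leading exponent caps the gain precisely at $p_D$.

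With $\bar y-y_\Omega\in L^s(\Omega)$ secured, the remaining steps mirror Theorem~\ref{T3.6}. Applying Lemma~\ref{L2.1} at the exponent $s$ gives \eqref{E5.1} with $\bar\varphi_{reg}\in W^{2,s}(\Omega)$ and coefficients $\hat c_{j,m}$ indexed by $\mathbb J^m_s$. I then compute $\bar u=\frac1\nu\partial_n\bar\varphi$ termwise via \eqref{E3.8}--\eqref{E3.9}: the odd orders $m=1,3$ produce symmetric contributions $a_{j,m}\xi_j r^{m\lambda_j-1}$ while $m=2$ produces an antisymmetric one carried by the jump function $\chi_j$, with $a_{j,m}$ given by \eqref{E3.13} (legitimate since $0\in[a,b]=\mathbb R$) and $\bar u_{reg}=\frac1\nu\partial_n\bar\varphi_{reg}\in W^{1-1/s,s}(\Gamma)$ by the trace theorem together with $\bar\varphi_{reg}=0$ on $\Gamma$; this is \eqref{E5.2}. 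Finally I recover $\bar y=S\bar u$ by splitting the boundary datum into its $W^{1-1/s,s}$ regular part, handled by Lemma~\ref{L2.3}, and the singular monomials, handled by Lemma~\ref{L2.5}; absorbing into $\bar y_{reg}$ the resulting $W^{2,p}$ interior part (which embeds in $W^{1,s}$ in two dimensions), the contribution of the regular boundary datum, and all singular terms that actually lie in $W^{1,s}(\Omega)$, only the interior profiles $r_j^{m\lambda_j-1}s_{j,m}(\theta_j)$ and the residual terms $r_j^{\lambda_j}\sin(\lambda_j\theta_j)$ with $\lambda_j\le1-2/s$ survive, giving \eqref{E5.3}. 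Uniqueness of all coefficients is inherited from the uniqueness statements in Lemmas~\ref{L2.1} and~\ref{L2.5}.
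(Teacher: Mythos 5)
Your proposal is correct and follows essentially the same route as the paper: reduce to the constrained argument of Theorems \ref{T3.2}--\ref{T3.6}, replace the maximum-principle step by a bootstrap that uses the explicit singular structure of $\partial_n\bar\varphi$ to get $\bar y\in L^s(\Omega)$ for $s\le s^*$, $s<p_D$, and then sum over $\mathbb{J}^m_s$ instead of $\mathbb{H}^m_s$. The only cosmetic difference is the starting point of the bootstrap (you begin from $L^2$ and one application of Lemma \ref{L2.1} with $p=2$, while the paper starts from $H^{1/2}(\Omega)\subset L^4(\Omega)$ and dispatches $s^*\le 4$ immediately); both variants are sound and terminate after one refinement for the same reason you give.
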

\begin{proof}The proof is very similar to those of theorems \ref{T3.2}, \ref{T3.4} and \ref{T3.6}. We will only emphasize on the main difference: at the beginning of the proof of Theorem \ref{T3.2} we used that the optimal control was bounded to obtain that the optimal state was a function in $L^{s^*}(\Omega)$. Now the optimal control is not bounded, so we use a bootstrapping argument to show that $\bar y\in L^s(\Omega)$ for $s\leq s^*$, $s<p_D$. The result follows then using the same techniques as before. Notice that now we do not need the sets $\mathbb{H}^m_s$, since we do not have to exclude in the expression of the singular part of the control the corners where the normal derivative of the adjoint state is not bounded.

In a first step we have that $\bar u\in L^2(\Gamma)$, and hence $\bar y\in H^{1/2}(\Omega)\subset L^4(\Omega)$.
If $s^*\leq 4$ the proof is complete since $4 < p_D$ for any polygonal domain.

Suppose $s^*>4$.
The normal derivative of the regular part of the adjoint state is in $W^{1-1/4,4}(\Gamma)$, but now, since we have no control constraints, we have to take into account the normal derivative of the singular part near the non-convex corners. We have so far that the optimal control can be written as the sum of a regular part, which is in $W^{1-1/4,4}(\Gamma)$ plus a singular part, that behaves as $r_1^{\lambda_1-1}$. For the regular part we  apply Lemma \ref{L2.3} and for the singular part we apply Lemma \ref{L2.5}, and we have that the optimal state can be written as the sum of a regular part which is in $W^{1,4}(\Omega)\subset L^{s^*}(\Omega)$ plus a  singular part that behaves at worst as $r_1^{\lambda_1-1}\xi_1\in L^s(\Omega)$ for all $s<2/(1-\min\{1,\lambda_1\})=p_D$. So we have that $\bar y\in L^s(\Omega)$ for all $s\leq s^*$, $s<p_D$.
\qquad\end{proof}

We will 
 finish this section stating some regularity results of the optimal solution in the unconstrained case in some special situations.

\begin{corollary}\label{C5.2}
Suppose the assumptions of Theorem \ref{T5.1} are satisfied. Then, for all $p\leq s^*$, $p<p_\Omega$ we have that $\bar\varphi\in W^{2,p}(\Omega)$, $\bar u\in W^{1-1/p,p}(\Gamma)$ and $\bar y\in W^{1,p}(\Omega)$.
\end{corollary}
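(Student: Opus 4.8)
The plan is to read everything off Theorem~\ref{T5.1} by specializing its exponent $s$ to the value $p$ and observing that the hypothesis $p<p_\Omega$ forces every singular index set appearing in the decompositions \eqref{E5.1}--\eqref{E5.3} to be empty. First I would record the arithmetic behind $p_\Omega$: the inequality $p<p_\Omega=2/(2-\min\{\lambda_1,2\})$ is equivalent to $2-2/p<\min\{\lambda_1,2\}\le\lambda_1$, so that for every corner $j$ and every $m\ge1$ one has $m\lambda_j\ge\lambda_j\ge\lambda_1>2-2/p$.

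Next I would check that Theorem~\ref{T5.1} is applicable with $s=p$. This requires $p\le s^*$, which is assumed, and $p<p_D$. I would verify $p_\Omega\le p_D$ by a short case distinction: if $\lambda_1\ge1$ then $p_D=+\infty$ while $p_\Omega$ is finite; if $\lambda_1<1$ then $p_\Omega=2/(2-\lambda_1)<2/(1-\lambda_1)=p_D$. In either case $p<p_\Omega\le p_D$, so Theorem~\ref{T5.1} provides the decompositions \eqref{E5.1}, \eqref{E5.2} and \eqref{E5.3} with regular parts $\bar\varphi_{reg}\in W^{2,p}(\Omega)$, $\bar u_{reg}\in W^{1-1/p,p}(\Gamma)$ and $\bar y_{reg}\in W^{1,p}(\Omega)$.

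Then I would show that all singular sums vanish. From $m\lambda_j>2-2/p$ it follows that $\mathbb{J}^m_p=\{j:0<m\lambda_j<2-2/p\}=\emptyset$ for $m=1,2,3$, so the sums over $j\in\mathbb{J}^m_p$ in \eqref{E5.1}, \eqref{E5.2} and \eqref{E5.3} are empty; likewise $\lambda_j\ge\lambda_1>2-2/p>1-2/p$ gives $\{j:\lambda_j<1-2/p\}=\emptyset$, which removes the last sum in \eqref{E5.3}. Consequently $\bar\varphi=\bar\varphi_{reg}\in W^{2,p}(\Omega)$, $\bar u=\bar u_{reg}\in W^{1-1/p,p}(\Gamma)$ and $\bar y=\bar y_{reg}\in W^{1,p}(\Omega)$, which is exactly the assertion.

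I do not expect a genuine obstacle, as the corollary is essentially a reading-off from Theorem~\ref{T5.1}; the only points needing minor care are two admissibility checks. The first is that the non-resonance condition of the type $2(p-1)/(p\lambda_j)\notin\mathbb{Z}$, needed to invoke the singular expansion at the exponent $p$, is automatic here: indeed $2(p-1)/(p\lambda_j)=(2-2/p)/\lambda_j\in(0,1)$ by the same inequality $2-2/p<\lambda_j$, so no extra genericity hypothesis on $p$ is required. The second is that for non-convex domains one has $p<p_\Omega<2$, so the exponent may fall below $2$; this is harmless, since Theorem~\ref{T5.1} (through Lemmas~\ref{L2.3} and \ref{L2.1}) already yields its conclusions for every admissible $s<p_D$ with no lower bound $s\ge2$.
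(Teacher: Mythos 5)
Your argument is correct and rests on exactly the same arithmetic as the paper's, namely that $p<p_\Omega$ is equivalent to $2-2/p<\min\{\lambda_1,2\}\le\lambda_j$; but it is packaged differently. The paper keeps the expansion of Theorem \ref{T5.1} at a fixed admissible exponent $s\ge 2$ (so the singular sums are genuinely present for non-convex corners) and then checks that the worst singular functions themselves lie in the target spaces: $\xi_1 r_1^{\lambda_1-1}\in W^{1,p}(\Omega)$ and $W^{1-1/p,p}(\Gamma)$, and $\xi_1 r_1^{\lambda_1}\in W^{2,p}(\Omega)$, precisely because $p<p_\Omega$. You instead re-index the decomposition at the exponent $s=p$ and observe that all the sets $\mathbb{J}^m_p$ and $\{j:\lambda_j<1-2/p\}$ are then empty; this is the same fact (emptiness of $\mathbb{J}^m_p$ \emph{is} the statement that every $\xi_j r_j^{m\lambda_j}$ belongs to $W^{2,p}(\Omega)$), viewed from the other side. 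What your route buys is a cleaner statement (no singular terms at all at level $p$) and the nice observation that the non-resonance condition is automatic since $(2-2/p)/\lambda_j\in(0,1)$; what it costs is that for non-convex domains you must invoke Theorem \ref{T5.1} with $s=p<p_\Omega<2$, whereas that theorem and the surrounding apparatus ($s^*\ge 2$, the sets $\mathbb{H}^m_s$, the trace characterizations) are written for $s\ge 2$ -- formally the statement admits $s<2$ and Lemmas \ref{L2.1} and \ref{L2.3} do cover that range, but the paper's route avoids having to certify this. Two small inaccuracies that do not affect the conclusion: $p_\Omega$ is not finite when $\lambda_1\ge 2$ (e.g.\ a triangle with all angles below $\pi/2$), though $p<p_D$ still holds in every case; and the chain $\lambda_j>2-2/p>1-2/p$ is what you need for the last sum in \eqref{E5.3}, which you state correctly.
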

\begin{proof}Since $p\leq s^*$ and $p<p_\Omega<p_D$, the regular parts of the involved functions satisfy $\bar u_{reg}\in W^{1-1/p,p}(\Gamma)$, $\bar y_{reg}\in W^{1,p}(\Omega)$ and $\bar\varphi_{reg}\in W^{2,s}(\Omega)$ due to Theorem \ref{T5.1}.


On the other hand, the assumption $p<p_\Omega$ implies $\xi_1r_1^{\lambda_1-1}\in W^{1,p}(\Omega)$, and hence obviously $\xi_1r_1^{\lambda_1}\in W^{2,p}(\Omega)$ and $\xi_1r_1^{\lambda_1-1}\in W^{1-1/p,p}(\Gamma)$. Since these are the worst terms we may find in the singular parts, the proof is complete.
\qquad\end{proof}

With the same techniques of Section \ref{sec:4}, (Corollary \ref{C4.1} and \ref{C4.2}) we can obtain the following result. 
\begin{corollary}Suppose the assumptions of Theorem \ref{T5.1} are satisfied and $y_\Omega\in H^{t^*}(\Omega)$ for some $t^*\leq 1$. Define $t>0$ such that
\begin{equation*}
t\leq t^*, t<1,\ t<\lambda_1-1.
\end{equation*}
Then $\bar u\in H^{1/2+ t}(\Gamma)$ and $\bar y\in H^{1+ t}(\Omega)$. 
\end{corollary}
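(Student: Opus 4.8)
The plan is to mirror the proof architecture already established for the constrained problem in Corollaries \ref{C4.1} and \ref{C4.2}, adapting it to the unconstrained setting provided by Theorem \ref{T5.1}. First I would use the hypothesis $y_\Omega\in H^{t^*}(\Omega)$ with $t^*\le 1$ together with Theorem \ref{T5.1} to obtain a starting regularity for the optimal state. Since $t^*>0$, we may select an exponent $s^*>2$ with $y_\Omega\in L^{s^*}(\Omega)$, so Theorem \ref{T5.1} yields $\bar y\in W^{1,s}(\Omega)\hookrightarrow H^1(\Omega)\hookrightarrow H^{t^*}(\Omega)$ for the appropriate $s$. Feeding $\bar y-y_\Omega\in H^{t^*}(\Omega)$ into the adjoint equation \eqref{E3.3} and invoking the weighted/singular regularity results of Dauge \cite[\S23.C]{Dauge1988} (see also \cite[Th.~2.4.3 and \S2.7]{Grisvard1992}) gives, exactly as in Corollary \ref{C4.1}, a decomposition of $\bar\varphi$ into a regular part $\bar\varphi_{reg}\in H^{2+t^*}(\Omega)$ plus singular corner terms $r_j^{m\lambda_j}\sin(m\lambda_j\theta_j)\xi_j$.

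Next I would transfer this regularity to the control through the optimality relation. In the unconstrained case the projection in \eqref{E3.1} disappears and we simply have $\bar u=\tfrac1\nu\partial_n\bar\varphi$ on $\Gamma$. Since $\bar\varphi_{reg}\in H^{2+t^*}(\Omega)$ vanishes on $\Gamma$, its normal derivative lies in $H^{1/2+t}(\Gamma)$ whenever $t\le t^*$ and $t<1$, following the same trace argument used in the proof of Corollary \ref{C4.2}. The decisive difference from the constrained case is that without the bound constraints no corner can be excluded: the singular terms near \emph{every} corner, including the nonconvex ones, must be retained. Computing $\partial_n$ of the singular terms via \eqref{E3.8} and \eqref{E3.9} produces boundary contributions behaving like $r_j^{m\lambda_j-1}$. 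The worst of these is governed by the smallest exponent $\lambda_1$, giving a term $\sim r_1^{\lambda_1-1}$ on $\Gamma$. For this term to lie in $H^{1/2+t}(\Gamma)$ we need $t<\lambda_1-1$, which is precisely the third bound imposed on $t$; this is where the hypothesis that nonconvex corners are now present forces the restriction absent in Corollary \ref{C4.2}. Collecting these, $\bar u\in H^{1/2+t}(\Gamma)$.

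Finally I would lift the control regularity back to the state. Given $\bar u\in H^{1/2+t}(\Gamma)$ there is an extension $U\in H^{1+t}(\Omega)$ with $U=\bar u$ on $\Gamma$ and $\Delta U\in H^{-1+t}(\Omega)$, so $z=\bar y-U$ solves a homogeneous Dirichlet problem with right-hand side in $H^{-1+t}(\Omega)$. Applying once more the Dauge/Grisvard singular expansion as in Corollary \ref{C4.2} decomposes $z$ into $z_{reg}\in H^{1+t}(\Omega)$ plus corner singularities $r_j^{m\lambda_j}\sin(m\lambda_j\theta_j)\xi_j$; since the mildest such term controls the regularity and $t<\lambda_1$ (which follows from $t<\lambda_1-1$ for any admissible corner), the singular part lies in $H^{1+t}(\Omega)$, and therefore $\bar y\in H^{1+t}(\Omega)$.

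The main obstacle, as in the proof of Corollary \ref{C4.2}, is the sharp bookkeeping of the corner exponents: one must verify that the single bound $t<\lambda_1-1$ simultaneously (i) keeps the strongest boundary singularity $r_1^{\lambda_1-1}$ inside $H^{1/2+t}(\Gamma)$ and (ii) guarantees the interior singularities remain in $H^{1+t}(\Omega)$. Because $\lambda_1$ is the global minimal exponent and nonconvex corners satisfy $\lambda_1<1$, this is the genuinely binding constraint and the one that distinguishes the unconstrained result from its constrained counterpart, where the bound constraints suppressed exactly these singularities.
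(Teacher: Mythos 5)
Your proposal is correct and takes exactly the route the paper intends: the paper offers no explicit proof beyond the remark that the result follows ``with the same techniques of Section \ref{sec:4} (Corollaries \ref{C4.1} and \ref{C4.2})'', and your argument is precisely that adaptation --- bootstrap $\bar y$ into $H^{t^*}(\Omega)$, decompose $\bar\varphi$ with $\bar\varphi_{reg}\in H^{2+t^*}(\Omega)$, read off $\bar u=\nu^{-1}\partial_n\bar\varphi$ with the corner terms $r_j^{m\lambda_j-1}$ now retained at every corner (no projection, no active-set condition), and lift back to the state via an $H^{1+t}(\Omega)$ extension. One caveat on your closing commentary only: the claim that $t<\lambda_1-1$ is the ``genuinely binding'' constraint in the presence of nonconvex corners overlooks that then $\lambda_1<1$, so no admissible $t>0$ exists and the corollary is vacuous; the statement has content only when $\lambda_1>1$, which is also what makes your embedding of $\bar y$ into $H^1(\Omega)$ legitimate despite the singular terms in \eqref{E5.3}.
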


\bibliography{references_for_constrained_Dirichlet}

\begin{thebibliography}{10}

\bibitem{Apel-Nicaise-Pfefferer2014}
{\sc Th. Apel, S.~Nicaise, and J.~Pfefferer}, {\em Discretization of the
  {P}oisson equation with non-smooth data and emphasis on non-convex domains}.
\newblock Preprint arXiv:1505.01229, 2015.

\bibitem{Berggren2004}
{\sc M.~Berggren}, {\em Approximations of very weak solutions to boundary-value
  problems}, SIAM J. Numer. Anal., 42 (2004), pp.~860--877 (electronic).

\bibitem{Brenner-Scott94}
{\sc S.C. Brenner and L.R. Scott}, {\em The Mathematical Theory of Finite
  Element Methods}, Springer-Verlag, New York, Berlin, Heidelberg, 1994.

\bibitem{Casas-Gunther-Mateos2011}
{\sc E.~Casas, A.~G{\"u}nther, and M.~Mateos}, {\em A paradox in the
  approximation of {D}irichlet control problems in curved domains}, SIAM J.
  Control Optim., 49 (2011), pp.~1998--2007.

\bibitem{Casas-Mateos-Raymond-2009}
{\sc E.~Casas, M.~Mateos, and J.-P. Raymond}, {\em Penalization of {D}irichlet
  optimal control problems}, ESAIM Control Optim. Calc. Var., 15 (2009),
  pp.~782--809.

\bibitem{Casas-Raymond2006}
{\sc E.~Casas and J.-P. Raymond}, {\em Error estimates for the numerical
  approximation of {D}irichlet boundary control for semilinear elliptic
  equations}, SIAM J. Control Optim., 45 (2006), pp.~1586--1611 (electronic).

\bibitem{Dauge1988}
{\sc M.~Dauge}, {\em Elliptic Boundary Value Problems in Corner Domains},
  vol.~1341 of Lecture Notes in Mathematics, Springer-Verlag, Berlin, 1988.

\bibitem{Dauge1992}
\leavevmode\vrule height 2pt depth -1.6pt width 23pt, {\em Neumann and mixed
  problems on curvilinear polyhedra}, Integr. Equat. Oper. Th., 15 (1992),
  pp.~227--261.

\bibitem{Decklenick-Gunther-Hinze2009}
{\sc K.~Deckelnick, A.~G{\"u}nther, and M.~Hinze}, {\em Finite element
  approximation of {D}irichlet boundary control for elliptic {PDE}s on two- and
  three-dimensional curved domains}, SIAM J. Control Optim., 48 (2009),
  pp.~2798--2819.

\bibitem{Grisvard85}
{\sc P.~Grisvard}, {\em Elliptic problems in nonsmooth domains}, vol.~24 of
  Monographs and Studies in Mathematics, Pitman (Advanced Publishing Program),
  Boston, MA, 1985.

\bibitem{Grisvard1992}
\leavevmode\vrule height 2pt depth -1.6pt width 23pt, {\em Singularities in
  boundary value problems}, vol.~22 of Recherches en Math\'ematiques
  Appliqu\'ees [Research in Applied Mathematics], Masson, Paris;
  Springer-Verlag, Berlin, 1992.

\bibitem{Jerison-Kenig1995}
{\sc D.~Jerison and C.E. Kenig}, {\em The inhomogeneous {D}irichlet problem in
  {L}ipschitz domains}, J. Funct. Anal., 130 (1995), pp.~161--219.

\bibitem{Lions-Magenes68}
{\sc J.L. Lions and E.~Magenes}, {\em Probl\`{e}mes aux Limites non
  Homog\`{e}nes}, Dunod, Paris, 1968.

\bibitem{MayRannacherVexler2013}
{\sc S.~May, R.~Rannacher, and B.~Vexler}, {\em Error analysis for a finite
  element approximation of elliptic {D}irichlet boundary control problems},
  SIAM Journal on Control and Optimization, 51 (2013), pp.~2585--2611.

\bibitem{OfPhanSteinbach2010}
{\sc G.~Of, T.X. Phan, and O.~Steinbach}, {\em Boundary element methods for
  {D}irichlet boundary control problems}, Mathematical Methods in the Applied
  Sciences, 33 (2010), pp.~2187--2205.

\bibitem{Stampacchia1965}
{\sc Guido Stampacchia}, {\em Le probl\`eme de {D}irichlet pour les \'equations
  elliptiques du second ordre \`a coefficients discontinus}, Ann. Inst. Fourier
  (Grenoble), 15 (1965), pp.~189--258.

\end{thebibliography}
\bibliographystyle{siam}
\end{document}